\renewcommand*\l@section{\@dottedtocline{1}{1.5em}{2.3em}}
\theoremstyle{plain}
\newtheorem{theorem}{Theorem}
\newtheorem{proposition}[theorem]{Proposition}
\newtheorem{lemma}[theorem]{Lemma}
\newtheorem{example}[theorem]{Example}
\theoremstyle{definition}
\newtheorem{definition}{Definition}
\newtheoremstyle{myrem}%name
 {3pt}%Space above
 {3pt}%Space below
 {\normalsize}%Body font
 { }%Indent amount
 {\itshape}% Theorem head font
 {:}%Punctuation after theorem head
 { }%Space after theorem head 2
 {}%Theorem head spec (can be left empty, meaning ?normal?)
 \theoremstyle{myrem}
 \newtheorem{remark}{Remark}
 \appto\remark{\leftskip\parindent}
 \appto\remark{\rightskip\parindent}
\numberwithin{equation}{section}
\numberwithin{theorem}{section}
\begin{document}

\begin{center}
{\Large {\textbf {A  Discrete  Morse Theory for  Digraphs}}}
 \vspace{0.58cm}

Chong Wang*\dag, Shiquan Ren*

\bigskip

\bigskip

    \parbox{24cc}{{\small
{\textbf{Abstract}.}
Digraphs are generalizations of graphs in which each edge is assigned with a direction or two directions. In this paper, we define discrete Morse functions on digraphs, and prove that  the homology of the Morse complex and the path homology are isomorphic for a transitive digraph. We also study the collapses defined by discrete gradient vector fields. Let $G$ be a digraph and $f$ a discrete Morse function. Assume the out-degree and in-degree of any zero-point of $f$ on $G$ are both 1. We prove that the original digraph $G$ and its $\mathcal{M}$-collapse $\tilde{G}$  have the same path homology groups.
% and prove $G$ and $\tilde{G}$  have the same path-homology groups where $G$ is a digraph such that the out-degree and in-degree of any zero-point of the discrete Morse function $f$ on $G$ are both 1, and $\tilde{G}$ is  the digraph obtained by $\mathcal{M}$-collapses.

}}

 \end{center}

\vspace{1cc}

\footnotetext[1]
{ {\bf 2010 Mathematics Subject Classification.}  	Primary  55U15  55N35;  Secondary  55U35.
}

\footnotetext[2]{{\bf Keywords and Phrases.}   digraphs,  Morse function,  discrete gradient vector fields, acyclic matching,  $\mathcal{M}$-collapse. }

\footnotetext[3] { * first authors;   \dag   corresponding author. }

\section{Introduction}\label{se-1}
Digraph is an important topology model of complex networks, and its topological structure has important research value and wide application prospect in data science research.  For example, J. Bang-Jensen and G.Z. Gutin \cite{dig} studied digraphs and gave applications of digraphs in quantum mechanics,  finite automata,  deadlocks of computer processes, etc.

%For example, if we regard Alipay users as the vertex and payment relation as directed edges, the complex network composed by Alipay users can be regarded as a digraph.% we can regard the complex network composed by Alipay users as a directed graph.%Digraphs are generalizations of graphs by assigning a direction or two directions to each edge.
%A graph is a digraph where each edge is assigned with two directions.

The path homology of digraphs was first defined and studied by A. Grigor'yan, Y. Lin, Y. Muranov and S.T. Yau  in \cite{9} in 2012. Subsequently, in 2014, it is proved in \cite{10} that the path homology is functorial with respect to morphisms of digraphs, and is invariant up to certain homotopy relations of these morphisms. In 2015,  A. Grigor'yan, Y. Lin, Y. Muranov and S.T. Yau  \cite{3,4}  studied the cohomology of digraphs and graphs by using the path homology theory. Moreover, in 2017, A. Grigor'yan, Y. Muranov, and S.T. Yau \cite{2} proved some K\"{u}nneth formulas  for the path homology with coefficients in a field. In 2018,  A. Grigor'yan, Y. Muranov, V. Vershinin and S.T. Yau \cite{yau2} generalized the path homology theory of digraphs and constructed the path homology theory of multigraphs and quivers.

Discrete Morse theory originated from the study of homology groups and cell structure of cell complexes. By reducing the number of cells and simplices, discrete Morse theory is helpful to simplify the calculation of homology groups. Moreover, discrete Morse theory  provides theoretical supports for
the calculation of persistent homology in the field of topological data analysis \cite{kannan,lewiner,nanda}. In 1998, R. Forman \cite{forman1} invented the discrete Morse theory for simplicial complexes or general cell complexes, as a discrete version of the classical Morse theory of smooth Morse functions. In the subsequent references \cite{forman3,forman2,witten}, R. Forman studied the discrete Morse theory, the cup product of cohomology,  and Witten Morse theory based on \cite{forman1}. In 2005, Dmitry N. Kozlov \cite{chain} extended the combinatorial Morse complex construction to arbitrary free chain complexes, and give a short, self-contained, and elementary proof of the quasi-isomorphism between the original chain complex and its Morse complex. From 2007 to 2009, R. Ayala et al. \cite{ayala1,ayala2,ayala3,ayala4} studied the discrete Morse theory on graphs  by using the discrete Morse theory of cell complexes and simplicial complexes given by R. Forman.

%Discrete Morse theory is helpful to reduce the number of cells and simplices, which is helpful to simplify the calculation of homology groups. It provides theoretical support for the calculation of persistent homology in the field of topological data analysis. For example, T. Lewiner, H. Lopes and G. Tavares \cite{lewiner} studied the mesh compression problem by using the discrete Morse theory of cell complexes.  K. Mischaikow and V. Nanda \cite{nanda},  H. Kannan, E. Saucan, I. Roy and  A. Samal \cite{kannan} studied the calculation of persistent homology groups by using discrete Morse theory.

Let $R$ be an arbitrary commutative ring with a unit.  Let $G=(V,E)$ be a digraph.  An {\it $n$-path} is a sequence $v_0 v_1\cdots v_n$ of $n+1$ vertices in $V$ where for each $1 \leq i \leq n$, $v_{i-1}$ and $v_i$ are assumed to be distinct. Let $\Lambda_n(V)$ be the free $R$-module consisting of all the formal linear combinations of the $n$-paths on $V$.  Let $\partial_n=\sum_{i=0}^n (-1)^i d_i$. Then $\partial_n$ is an $R$-linear map from $\Lambda_n(V)$ to $\Lambda_{n-1}(V)$ satisfying $\partial_{n}\partial_{n+1}=0$ for each $n\geq 0$ (cf. \cite{9,10,4,2}).  Hence $\{\Lambda_n(V),\partial_n\}_{n\geq 0}$ is a chain complex. An {\it allowed elementary  $n$-path} is a $n$-path such that for each $1\leq i\leq n$, $v_{i-1}\to v_i$ is a direct edge of $G$.   For simplicity, we assume that in an allowed elementary $n$-path, $v_{i-1}\not=v_i$ for each $1\leq i \leq n$. Let $P_n(G)$ be the free $R$-module consisting of all the formal linear combinations of allowed elementary $n$-paths on $G$. Then $P_n(G)$ is a  sub-$R$-module of $\Lambda_n(V)$. Note that under the boundary operator $\partial$,  the image  of an allowed elementary path does not have to be allowed.  Hence $\partial$ may not map $P_n(G)$  to $P_{n-1}(G)$. Nevertheless,  $P_n(G)$ has the sub-$R$-module $\Omega_n(G)$ which which consists of all the $\partial$-invariant elements in $P_n(G)$. We define the path homology of $G$ as the homology  of chain complex $\{\Omega_n(G),\partial_n\}$.

%In this paper, we define the discrete Morse function on digraph as $f: V(G)\longrightarrow [0,+\infty)$ be a discrete Morse function on $G$.
%A digraph $G$ is called {\it transitive} if  for any directed edges $u\to v$ and $v\to w$ of $G$,  $u\to w$ is also a directed edge.
%Obviously, $\Omega_n(G)=P_n(G)$ for each $n\geq 0$ (cf. \cite[Section~2.3]{dig}).

In this paper, we define the discrete Morse functions on digraphs in Section~\ref{se-3},  and analysis the basic properties in Section~\ref{se-4}.
In Section~\ref{se-6}, we study the discrete gradient vector fields and Morse complexes of digraphs. Let $G$ be a transitive digraph. Then we give  the first main result  of  this paper in the following theorem.
\begin{theorem}%(Theorem~\ref{th-6.1})
\label{th-0.1}
 Let $G$ be a transitive digraph. Let $R$ be an arbitrary commutative ring with a unit. Then the chain complex $\Omega_*(G)$ can be decomposed as a direct sum of the Morse complex and atom chain complexes:
 \begin{eqnarray}\label{eq-5.1}
\Omega_{\ast}(G)=C_{*}^{\mathcal{M}}(B_*(G))\bigoplus\text{(direct sum of atom chain complexes)}.
\end{eqnarray}
And for each $n\geq 0$, the path homology of $G$ is isomorphic to the homology of the Morse complex
\begin{eqnarray}\label{eq-5.2}
 H_{n}(\{\Omega_*(G),\partial_*\};R)\cong  H_{n}(\{C_{*}^{\mathcal{M}}(B_*(G)),\partial_*^{\mathcal{M}}\};R)
 \end{eqnarray}
 where the explicit formula of $\partial_*^{\mathcal{M}}$ is given in Definition~\ref{def-6.1}.
\end{theorem}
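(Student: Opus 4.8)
The plan is to transfer the classical algebraic discrete Morse theory of Forman~\cite{forman1} and Kozlov~\cite{chain} to the path chain complex, using transitivity exactly once: to turn $\Omega_\ast(G)$ into an honest based free chain complex to which the cancellation machinery applies. The first step is to record that for a transitive digraph one has $\Omega_n(G)=P_n(G)$ for all $n$. Indeed, if $v_0v_1\cdots v_n$ is an allowed elementary path then for each $1\le i\le n-1$ transitivity supplies the edge $v_{i-1}\to v_{i+1}$, so every surviving term of $\partial(v_0\cdots v_n)$ is again an allowed elementary path (the non-regular terms being zero under the convention of Section~\ref{se-1}); hence $\partial$ already carries $P_\ast(G)$ into itself and the $\partial$-invariant submodule is all of it. Thus $\{\Omega_\ast(G),\partial_\ast\}$ is a free $R$-chain complex equipped with the preferred basis of allowed elementary paths graded by length, and each codimension-one face $d_i\beta$ occurs in $\partial\beta$ with a unit coefficient $\pm1$. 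This is the only place transitivity is needed; without it the $\partial$-invariant module is not spanned by paths and there is no natural basis for the cancellations below to act on.

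Next I would pass from the discrete Morse function $f$ of Section~\ref{se-3} to its discrete gradient vector field, which by Sections~\ref{se-4} and~\ref{se-6} is an acyclic partial matching $\mathcal{M}$ on the basis of allowed elementary paths: it pairs each non-critical path $\alpha$ of length $n$ with a path $\beta=\beta(\alpha)$ of length $n+1$ in whose boundary $\alpha$ appears with coefficient $\pm1$, leaving the critical paths, which freely generate $B_\ast(G)$. For one matched pair $(\alpha,\beta)$ I would perform the elementary base change replacing $\alpha$ by $\alpha':=\pm\partial\beta$ and, for every other generator $\gamma$ of degree $n+1$, replacing $\gamma$ by $\gamma':=\gamma-c\beta$ where $c$ is the coefficient of $\alpha$ in $\partial\gamma$. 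In the new basis $\partial\beta=\alpha'$, no other generator has $\alpha'$ in its boundary, $\partial\alpha'=\partial^2\beta=0$, and then $\partial^2=0$ forces $\beta$ to appear in no boundary whatsoever; hence $R\alpha'\oplus R\beta$ splits off as a direct summand isomorphic to the two-term exact complex $0\to R\xrightarrow{\ \cong\ }R\to0$, an atom chain complex, with complementary subcomplex again free on a distinguished basis.

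Iterating this step over all matched pairs yields the decomposition \eqref{eq-5.1}: each pair contributes one atom chain complex and the surviving summand is free on the transformed critical paths, i.e. it is $C_\ast^{\mathcal{M}}(B_\ast(G))$. Unwinding the iterated substitutions of $\pm\partial\beta$ for $\alpha$ shows that in this summand the coefficient of a critical path $\alpha'$ in $\partial\beta'$ equals the signed count of gradient ($V$-)paths from $\beta'$ to $\alpha'$, which is precisely the formula of Definition~\ref{def-6.1}; in particular $(\partial_\ast^{\mathcal{M}})^2=0$ follows from $\partial^2=0$ together with \eqref{eq-5.1}. Finally every atom chain complex is exact, so passing to homology in \eqref{eq-5.1} annihilates those summands and leaves $H_n(\Omega_\ast(G);R)\cong H_n(C_\ast^{\mathcal{M}}(B_\ast(G)),\partial_\ast^{\mathcal{M}};R)$ for every $n$, which is \eqref{eq-5.2}.

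The main obstacle is the bookkeeping in the middle step: one must verify that acyclicity of the discrete gradient vector field really does license an ordering of the matched pairs under which all successive base changes stay ``triangular'', so that at the $k$-th stage the pair $(\alpha_k,\beta_k)$ still has a unit boundary coefficient and the process terminates with a genuine direct-sum decomposition of chain complexes rather than merely a filtration or a subquotient. This is the digraph analogue of Forman's acyclicity argument, and I would model the proof on Kozlov~\cite{chain}; everything else — the $\pm1$ coefficients, exactness of the atoms, and the homology comparison — is then formal.
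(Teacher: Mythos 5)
Your proposal is correct and follows essentially the same route as the paper: transitivity gives $\Omega_n(G)=P_n(G)$ so the allowed elementary paths form a preferred basis, the discrete gradient vector field of $f$ is an acyclic matching on that basis, and the algebraic discrete Morse machinery then yields the splitting into $C_*^{\mathcal{M}}(B_*(G))$ plus exact atom complexes and hence the homology isomorphism. The only difference is that you sketch the base-change/cancellation proof of the splitting yourself, whereas the paper simply invokes Kozlov's theorem (its Lemma~\ref{le-8.1}) as a black box.
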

Let $f:V(G)\longrightarrow [0,+\infty)$ be a discrete Morse function on $G$.
%for any triple  of vertices $(u,v,w)$ such that $f(v)=0$, $u\to v\to w$ and $u\to w$ in $G$,
We  substitute the ordered triple of vertices $(u,v,w)$  satisfying  $f(v)=0$, $u\to v\to w$ and $u\to w$ in $G$ with the ordered couple of vertices $(u,w)$, and remove the directed edges $u\to v$ and $v\to w$. The resulting digraph $\tilde G$ and the restriction of $f$ on $\tilde G$ (denoted by $\tilde f$) form a pair $(\tilde G, \tilde f)$, which is called the {\it $\mathcal{M}$-collapse} of the pair $(G,f)$.
In Section~\ref{se-7}, we give the second main result in the following theorem.
\begin{theorem}%(Theorem~\ref{th-7.3})
\label{th-0.2}
Let $G$ be a digraph and $f: V(G)\longrightarrow [0,+\infty)$ a discrete Morse function on $G$. Assume the out-degree and in-degree of any zero-point of $f$ on $G$ are both 1. Then the natural inclusion map $i:\tilde{G}\to G$ induces an isomorphism from the path homology groups of $\tilde{G}$ to the path homology groups of $G$,  where  $\tilde{G}$ is obtained by $\mathcal{M}$-collapse of the pair $(G,f)$.
\end{theorem}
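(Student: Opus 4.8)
The plan is to realise $\tilde G$ as a deformation retract of $G$ in the homotopy theory of digraphs and then apply functoriality together with homotopy invariance of path homology (the result of \cite{10} recalled in the Introduction). I would first handle a single collapse. Let $v$ be a zero-point of $f$; by hypothesis it has a unique in-neighbour $u$ (with $u\to v$) and a unique out-neighbour $w$ (with $v\to w$), and by definition of the $\mathcal{M}$-collapse the edge $u\to w$ is already present, the three vertices being pairwise distinct. Define $\rho\colon G\to\tilde G$ by $\rho(v)=u$ and $\rho(x)=x$ for $x\neq v$.

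The first step is to check that $\rho$ is a morphism of digraphs: the edge $u\to v$ is sent to the constant vertex $u$, the edge $v\to w$ to $u\to w\in\tilde G$, and every edge of $G$ not incident to $v$ is fixed and survives in $\tilde G$; the degree hypothesis on $v$ is exactly what rules out any further edges at $v$. Clearly $\rho\circ i=\mathrm{id}_{\tilde G}$. The second step is to show $i\circ\rho\simeq\mathrm{id}_G$: these two morphisms agree on all vertices except $v$, and at $v$ we have $(i\circ\rho)(v)=u$ with $u\to v$ an edge of $G$, so they are one-step homotopic — the conditions needed for the homotopy to extend over the relevant product digraph all reduce to $u\to v$ and $u\to w$ being edges of $G$. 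Hence $i\colon\tilde G\to G$ is a homotopy equivalence of digraphs, and homotopy invariance of path homology establishes the isomorphism in this case.

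For the general $\mathcal{M}$-collapse I would iterate over all collapsible zero-points. The key observation is that the hypotheses force the collapses to be independent: for a triple $(u,v,w)$ the vertex $u$ has out-degree at least $2$ and $w$ has in-degree at least $2$, so neither $u$ nor $w$ is itself a zero-point of $f$; consequently deleting one $v$ leaves intact the data of every other collapsible triple and preserves the in-degree/out-degree hypothesis for the remaining zero-points, and the composite of the successive inclusions is $i$. The step requiring the most care is the verification in the previous paragraph that $\rho$ and the homotopy are genuine morphisms of digraphs, which is precisely where the in-degree and out-degree of the zero-points being $1$ enters essentially; granted that, the homological conclusion is formal. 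Alternatively one can argue directly at the chain level, building a retraction $\Omega_\ast(G)\to\Omega_\ast(\tilde G)$ that contracts each allowed segment $u\,v\,w$ to $u\,w$ together with a chain homotopy that reinserts $v$; the subtle point there, which the homotopy-theoretic route avoids, is to check that these maps preserve the $\partial$-invariant submodules $\Omega_n$.
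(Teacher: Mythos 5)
Your proposal is correct and follows essentially the same route as the paper: exhibit the $\mathcal{M}$-collapse as a deformation retraction of digraphs and invoke homotopy invariance of path homology (Proposition~\ref{prop-2.2} and Theorem~\ref{th-2.1}). The only cosmetic difference is that you retract $v$ onto its in-neighbour $u$ rather than onto $w$ as the paper does, which merely forces the single edge of the line digraph $I_1$ in the homotopy to be oriented from the $\rho$-end toward the identity-end, a choice Definition~\ref{def-2.10} permits.
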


Particularly, a graph is a digraph where each edge is assigned with two directions.  %Each vertex on a graph belongs to at least one directed loop.
Since all discrete Morse functions on  graphs have strict positive values on all the vertices,  the (combinatorial) discrete gradient vector fields are all empty. Hence, both of the two main results are trivial for graphs.

\section{Preliminaries}\label{se-2}
In this section, we mainly review some basic concepts and theorems in \cite{dig,9,10} and give some properties of digraphs.

\subsection{Digraphs and Path homology}

A {\it digraph} $G=(V,E)$ is a couple of a set $V$ whose elements
are called the vertices, and a subset $E\subset \{V\times V \setminus\text{diag}\}$ of ordered pairs of vertices whose elements are
called directed edges or arrows. The  directed edge with starting point $v$ and  ending point $w$ is  denoted by $v \to  w$.  Triangles and squares are simple examples of digraphs. A {\it triangle} is a sequence of three distinct vertices $v_0,v_1,v_2\in V$ such that $v_0\to v_1, v_1\to v_2, v_0\to v_2$. A {\it square} is a sequence of four distinct $v_0,v_1,v_1',v_2\in V$,such that $v_0\to v_1, v_1\to v_2, v_0\to v_1', v_1'\to v_2$ (cf. \cite[Section~4.2]{9}).

We define
\begin{eqnarray*}
\Omega_n(G)&=& P_n(G) \cap \partial_n^{-1} P_{n-1}(G),\\
\Gamma_n(G)&=& P_n(G)+ \partial_{n+1} P_{n+1}(G).
\end{eqnarray*}
Then as graded $R$-modules,
\begin{eqnarray}\label{eq-2.1}
\Omega_*(G) \subseteq  P_*(G)\subseteq  \Gamma_*(G) \subseteq \Lambda_*(V).
\end{eqnarray}
And as chain complexes,
\begin{eqnarray*}
\{\Omega_n(G), \partial_n\mid_{\Omega_n(G)} \}_{n\geq 0} \subseteq  \{\Gamma_n(G), \partial_n\mid_{\Gamma_n(G)} \}_{n\geq 0}\subseteq \{\Lambda_n(V),\partial_n\}_{n\geq 0}.
\end{eqnarray*}
By \cite[Proposition~2.4]{hypergraph}, the canonical inclusion
\begin{eqnarray*}
\iota:  \Omega_n(G)  \longrightarrow  \Gamma_n(G), ~~~ n\geq 0
\end{eqnarray*}
of chain complexes  induces an isomorphism between the homology groups
\begin{eqnarray*}
\iota_*: H_m(\{\Omega_n(G), \partial_n\mid_{\Omega_n(G)} \}_{n\geq 0})\overset{\cong}{\longrightarrow} H_m(\{\Gamma_n(G), \partial_n\mid_{\Gamma_n(G)} \}_{n\geq 0}), ~~~ m\geq 0,
\end{eqnarray*}
which is  the {\it path homology} of $G$. The definition of path homology here  is  essentially consistent with \cite[Definition~3.12]{9}.

\begin{definition}\cite[Section~2.3]{dig}\label{trans}
 A digraph $G$ is called {\it transitive}, if for any two directed edges $u\to v$ and $v\to w$ of $G$, there is a directed edge $u\to w$ of $G$.
\end{definition}

\begin{proposition}\label{prop-2.1}
Let $G$ be a transitive digraph. Then  for each $n\geq 0$, we  have that $\Omega_n(G)=P_n(G)$.
\end{proposition}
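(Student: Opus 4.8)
The goal is to show that for a transitive digraph $G$, every allowed elementary $n$-path is $\partial$-invariant, i.e. $P_n(G)=\Omega_n(G)$. Since $\Omega_n(G)\subseteq P_n(G)$ always holds, I only need the reverse inclusion, and since both are free $R$-modules with bases consisting of (signed) allowed elementary paths, it suffices to prove that for every allowed elementary $n$-path $e_{v_0v_1\cdots v_n}$, the boundary $\partial_n e_{v_0v_1\cdots v_n}$ lies in $P_{n-1}(G)$.

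The plan is a direct computation. Recall $\partial_n e_{v_0\cdots v_n}=\sum_{i=0}^n(-1)^i e_{v_0\cdots\widehat{v_i}\cdots v_n}$. For each index $i$, the face $v_0\cdots\widehat{v_i}\cdots v_n$ deletes the vertex $v_i$. If $i=0$ or $i=n$, the resulting sequence $v_1\cdots v_n$ (resp. $v_0\cdots v_{n-1}$) still has all consecutive pairs joined by directed edges of $G$, hence is allowed. The only problematic faces are the ``middle'' ones, $0<i<n$, where deleting $v_i$ creates the new consecutive pair $v_{i-1},v_{i+1}$; here I must check that $v_{i-1}\to v_{i+1}$ is an edge of $G$. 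But $v_{i-1}\to v_i$ and $v_i\to v_{i+1}$ are edges of $G$ because the original path is allowed, so transitivity (Definition~\ref{trans}) gives exactly the edge $v_{i-1}\to v_{i+1}$. I should also note in passing that the face is still a genuine $(n-1)$-path in the sense required, i.e. consecutive vertices remain distinct: the pairs other than the new one are unchanged, and for the new pair $v_{i-1}\ne v_{i+1}$ holds because $v_{i-1}\to v_{i+1}$ being an edge of a digraph forces distinctness (edges avoid the diagonal). Hence every face is an allowed elementary $(n-1)$-path, so $\partial_n e_{v_0\cdots v_n}\in P_{n-1}(G)$, which means $e_{v_0\cdots v_n}\in P_n(G)\cap\partial_n^{-1}P_{n-1}(G)=\Omega_n(G)$.

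Combining, $P_n(G)\subseteq\Omega_n(G)$, and with the standard inclusion $\Omega_n(G)\subseteq P_n(G)$ we conclude $\Omega_n(G)=P_n(G)$ for all $n\geq 0$. There is no real obstacle here; the only point requiring any care is the bookkeeping of which faces can fail to be allowed, and this is resolved entirely by transitivity. (One edge case worth a sentence: for $n=0$ and $n=1$ the claim is immediate since $\partial_0=0$ and the faces of a $1$-path are single vertices, which are always allowed.)
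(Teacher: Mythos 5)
Your proof is correct and follows essentially the same route as the paper: both establish $P_n(G)\subseteq\Omega_n(G)$ by checking that every face $d_i\alpha$ of an allowed elementary path is still allowed, with transitivity supplying the edge $v_{i-1}\to v_{i+1}$ for the interior faces. Your version is just slightly more explicit about the boundary cases $i=0,n$ and the distinctness of consecutive vertices, which the paper leaves implicit.
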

\begin{proof}
By (\ref{eq-2.1}), $\Omega_n(G)\subseteq P_n(G)$. On the other hand, for any  allowed elementary $n$-path $\alpha=v_0\cdots v_n$, by Definition~\ref{trans},  we have that
\begin{eqnarray*}
d_i(\alpha)=v_0\cdots v_{i-1}\hat{v_i}v_{i+1}\cdots v_n, 0\leq i\leq n
\end{eqnarray*}
is still allowed in $G$. That is, $\partial_n\alpha$ is a linear combination of allowed elementary $(n-1)$-paths. Hence, $\Omega_n(G)\subseteq P_n(G)$. The proposition follows.
\end{proof}

\subsection{Morphisms and homotopy of digraphs}
Let $G$ and $G'$  be  digraphs.  A {\it morphism} (or  {\it digraph map}) is a map $f: V(G)\longrightarrow V(G')$ such that for any vertices $u,v\in V(G)$, if $u\to v$ is a directed edge of $G$,  then  either $f(u)=f(v)$ or $f(u)\to f(v)$ is a directed edge of $G'$ (cf. \cite[Definition~2.2]{10}).  We denote such a morphism shortly as $f: G\longrightarrow G'$.  A morphism $f: G\longrightarrow G'$  is called an {\it isomorphism} if $f$ is  a bijection from $V(G)$ onto $V(G')$, and the inverse of $f$ is also a morphism.

A {\it line digraph} $I_n$  is a digraph whose the set of   vertices is $\{v_0,v_1,\ldots,v_n\}$ and the set of directed edges contains exactly one of the directed edges $v_i\to v_{i+1}$, $v_{i+1}\to v_i$ for each $i=0,1,\ldots, n-1$, and no other directed edges  (cf. \cite[p. 632]{10}).   Note that a path is a special line digraph with all the directed edges $v_i\to v_{i+1}$. % For $0\leq i\leq n$, we sometimes write $v_i$ as $i$ for short.

Let $G=(V(G),E(G))$ and $H=(V(H),E(H))$ be two digraphs.  Define the {\it Cartesian product} $G\boxdot H$  as a digraph with the set of vertices $V(G)\times V(H)$ and the set of directed edges as follows:  for any $x,x'\in V(G)$ and any $(y,y')\in V(H)$,  we have $(x,y)\to (x',y')$ if and only if either $x=x'$ and $y\to y'$, or $x\to x'$ and $y=y'$ (cf. \cite[Definition~2.3]{10}).

\begin{definition}(cf. \cite[Definition~3.1]{10})\label{def-2.10}
Two morphisms $f,g: G\longrightarrow H$ are called {\it homotopic} and denoted as $f\simeq g$ if there exists a line digraph $I_n$ with $n\geq 1$ and a morphism  $F: G\boxdot I_n\longrightarrow H$ such that $F\mid_{G\boxdot \{0\}}= f$ and  $F\mid_{G\boxdot \{n\}}= g$.  %For simplicity, we write $F\mid_{G\boxdot \{0\}}$ as $F_0$ and $F\mid_{G\boxdot \{1\}}$ as $F_1$.
We call $F$ a {\it homotopy}.
\end{definition}

Two digraphs $G$ and $H$ are called {\it homotopy equivalent} if there exist morphisms $f: G\longrightarrow H$ and $g: H\longrightarrow G$ such that $f\circ g\simeq \text{id}_{H}$ and $g\circ f\simeq \text{id}_G$. We shall write $G \simeq H$ (cf. \cite[Definition~3.2]{10}).

\begin{definition}(cf. \cite[Definition~3.4]{10})\label{def-2.11}
A  {\it retraction} of $G$ onto $H$ is a digraph map $r: G \to H$ such that $r\mid_{H}=\text{id}_{H}$.  A retraction $r : G \to H$ is called a {\it deformation retraction} if $i\circ r\simeq \text{id}_G$, where $i: H \to G$ is the natural inclusion map.
\end{definition}

\begin{proposition}\cite[Proposition~3.5]{10}\label{prop-2.2}
Let $r : G \to H$ be a deformation retraction. Then $G \simeq H$ and the maps $r$ and $i$ are homotopy inverses of each other.\qed
\end{proposition}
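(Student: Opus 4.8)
The plan is to exhibit $r$ and $i$ explicitly as a pair of homotopy-inverse morphisms and then invoke the definition of homotopy equivalence directly. Setting $f=r:G\to H$ and $g=i:H\to G$, it suffices to verify the two homotopy relations $r\circ i\simeq \mathrm{id}_H$ and $i\circ r\simeq \mathrm{id}_G$. The second relation is immediate, since it is precisely the defining condition of a deformation retraction in Definition~\ref{def-2.11}; no work is needed there.

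For the first relation, I would use that $r$ is a retraction, so $r\mid_H=\mathrm{id}_H$, together with the fact that $i:H\to G$ is the natural inclusion. Then the composite $r\circ i:H\to H$ sends each vertex $v\in V(H)$ to $r(i(v))=r(v)=v$, so $r\circ i=\mathrm{id}_H$ as maps. It remains only to note that equal morphisms are homotopic. To that end I would first record the reflexivity of $\simeq$: for any morphism $\phi:G'\to H'$, the constant map $F:G'\boxdot I_1\to H'$ defined by $F(x,v_j)=\phi(x)$ for $j=0,1$ is a morphism. Indeed, on an edge $(x,v_0)\to(x,v_1)$ both endpoints map to $\phi(x)$, so $F$ takes equal values there; on an edge $(x,v_j)\to(x',v_j)$ arising from $x\to x'$ in $G'$ we obtain $\phi(x)\to\phi(x')$ or $\phi(x)=\phi(x')$ because $\phi$ is a morphism. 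Hence $F$ respects the edge structure of $G'\boxdot I_1$ and witnesses $\phi\simeq\phi$. Applying this with $\phi=r\circ i=\mathrm{id}_H$ yields $r\circ i\simeq\mathrm{id}_H$.

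Having established both homotopy relations, the morphisms $r$ and $i$ are homotopy inverses of each other, and therefore $G\simeq H$ by the definition of homotopy equivalence. The only genuinely content-bearing step is the reflexivity of $\simeq$, namely checking that the constant homotopy $F$ is compatible with the two types of directed edges in the Cartesian product $G'\boxdot I_1$; every other step is a formal unravelling of Definitions~\ref{def-2.10} and~\ref{def-2.11}.
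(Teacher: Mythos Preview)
Your argument is correct. Note, however, that the paper does not actually supply a proof of this proposition: it is quoted verbatim from \cite[Proposition~3.5]{10} and closed immediately with a \qed, so there is nothing in the present paper to compare your approach against. Your write-up is the standard verification: $r\circ i=\mathrm{id}_H$ because $r$ is a retraction, $i\circ r\simeq\mathrm{id}_G$ by the definition of deformation retraction, and the only point requiring care is that Definition~\ref{def-2.10} demands $n\geq 1$, which you handle correctly by exhibiting the constant homotopy over $I_1$ and checking it is a digraph morphism on both edge types of the Cartesian product.
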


\begin{theorem}\cite[Theorem~3.3]{10}\label{th-2.1}
Let $f\cong g: G\to H$ be two homotopic digraph maps. Then these maps induce the identical homomorphisms of path homology groups of $G$ and $H$. Consequently, if the digraphs $G$ and $H$ are homotopy equivalent, then their path homology groups are isomorphic.\qed
\end{theorem}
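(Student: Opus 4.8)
The plan is to establish the homology statement by means of an explicit chain homotopy and then to deduce the homotopy-equivalence assertion formally. Everything reduces to a single fact: the two slice inclusions $i_0,i_1\colon G\to G\boxdot I_1$, given by $i_0(x)=(x,0)$ and $i_1(x)=(x,1)$, induce the same map on path homology. Granting this, a one-step homotopy $F\colon G\boxdot I_1\to H$ with $F\mid_{G\boxdot\{0\}}=f$ and $F\mid_{G\boxdot\{1\}}=g$ satisfies $f=F\circ i_0$ and $g=F\circ i_1$, so functoriality of the induced map gives $f_\ast=F_\ast i_{0\ast}=F_\ast i_{1\ast}=g_\ast$. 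A general homotopy $F\colon G\boxdot I_n\to H$ is reduced to this case by restricting $F$ to the sub-digraphs $G\boxdot J_k$, where $J_k\subseteq I_n$ is the single edge on $\{k,k+1\}$: each restriction is a one-step homotopy between the consecutive slices $F_k:=F\mid_{G\boxdot\{k\}}$ and $F_{k+1}$, whence $f_\ast=F_{0\ast}=F_{1\ast}=\cdots=F_{n\ast}=g_\ast$. The orientation of the edge of $J_k$ only interchanges the roles of $i_0$ and $i_1$, so the argument is insensitive to it.

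To prove $i_{0\ast}=i_{1\ast}$ I would construct a prism operator. For an allowed elementary path $\alpha=v_0v_1\cdots v_n$ in $G$, set
\begin{eqnarray*}
L(\alpha)=\sum_{i=0}^{n}(-1)^{i}\,(v_0,0)\cdots(v_i,0)(v_i,1)(v_{i+1},1)\cdots(v_n,1),
\end{eqnarray*}
the signed sum of the staircase paths that run along the slice $G\boxdot\{0\}$ up to index $i$, climb the edge $(v_i,0)\to(v_i,1)$, and continue along $G\boxdot\{1\}$. Each staircase is an allowed path of $G\boxdot I_1$, since its horizontal steps use edges of $G$ and its one vertical step uses the edge $0\to1$ of $I_1$; hence $L$ maps $P_n(G)$ into $P_{n+1}(G\boxdot I_1)$. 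The standard prism computation, expanding $\partial$ of each staircase, yields the chain-homotopy identity
\begin{eqnarray*}
\partial_{n+1}L(\alpha)+L(\partial_n\alpha)=i_{1\#}(\alpha)-i_{0\#}(\alpha),
\end{eqnarray*}
where $i_{0\#},i_{1\#}$ denote the chain maps induced by $i_0,i_1$. A pleasant feature of this identity is that it upgrades $L$ to the invariant complexes for free: if $\alpha\in\Omega_n(G)$ then $\partial_n\alpha$ is allowed, so $L(\partial_n\alpha)\in P_n(G\boxdot I_1)$, while $i_{0\#}(\alpha),i_{1\#}(\alpha)\in P_n(G\boxdot I_1)$; the identity then forces $\partial_{n+1}L(\alpha)\in P_n(G\boxdot I_1)$, i.e.\ $L(\alpha)\in\Omega_{n+1}(G\boxdot I_1)$. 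Thus $L$ restricts to a chain homotopy $\Omega_\ast(G)\to\Omega_{\ast+1}(G\boxdot I_1)$ between $i_{0\#}$ and $i_{1\#}$, giving $i_{0\ast}=i_{1\ast}$.

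The step I expect to demand the most care is the verification of the prism identity itself: one must expand the boundary of every staircase, check that the diagonal deletions of the climbing vertices cancel between consecutive summands $P_{i-1}$ and $P_i$, confirm that the two extreme deletions produce exactly $i_{1\#}(\alpha)$ and $-i_{0\#}(\alpha)$, and track the regularity convention under which a staircase with two equal consecutive vertices is set to zero; it is the bookkeeping of the signs $(-1)^i$ against the face signs where errors are easy to make. Once the identity is in hand, the first assertion of the theorem follows from the reduction above. For the consequence, if $G\simeq H$ with homotopy inverses $f\colon G\to H$ and $g\colon H\to G$, then $f\circ g\simeq\mathrm{id}_H$ and $g\circ f\simeq\mathrm{id}_G$ give $f_\ast g_\ast=\mathrm{id}$ and $g_\ast f_\ast=\mathrm{id}$ on path homology, so $f_\ast$ is an isomorphism.
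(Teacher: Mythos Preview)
The paper does not supply its own proof of this statement: it is quoted verbatim from \cite[Theorem~3.3]{10} and closed with a \qed. Your outline is correct and is, in fact, essentially the argument given in the cited source \cite{10}: reduce a length-$n$ homotopy to a chain of one-step homotopies, and for a one-step homotopy build the prism operator $L$ on staircase paths to exhibit a chain homotopy between the two slice inclusions on $\Omega_\ast$. Your observation that the identity $\partial L+L\partial=i_{1\#}-i_{0\#}$ automatically forces $L(\Omega_n(G))\subseteq\Omega_{n+1}(G\boxdot I_1)$ is exactly the mechanism used there. So there is nothing to compare against in the present paper, and nothing to correct in your proposal.
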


\smallskip

\subsection{Some properties of digraphs}
\begin{lemma}\label{le-1.3}
Suppose $\alpha^{(n+1)}> \gamma^{(n)} > \beta^{(n-1)}$ are allowed elementary paths on $G$.
The starting points  of  $\alpha^{(n+1)}$, $\gamma^{(n)}$  and $\beta^{(n-1)}$  are the same, and the end points of them are also the same. %whose start and end points are the same.
Then either of the followings holds:
\begin{enumerate}[(a).]
\item
there exists an allowed elementary $n$-path $\gamma'^{(n)}\neq \gamma^{(n)}$ such that $\alpha > {\gamma'}  > \beta $;
\item
$\beta$ is obtained by removing two subsequent vertices $v_i\to v_{i+1}$ in $\alpha$ where $1\leq i\leq n-1$.
\end{enumerate}
\end{lemma}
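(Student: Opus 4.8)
The plan is to treat $\alpha>\gamma>\beta$ purely combinatorially: $\gamma$ is $\alpha$ with one vertex deleted, $\beta$ is $\alpha$ with two vertices deleted, and everything reduces to whether those two deleted vertices are consecutive in $\alpha$. Write $\alpha=v_0v_1\cdots v_{n+1}$. Since $\alpha>\gamma$ forces $\gamma=d_p(\alpha)$ for some $p$, and $\gamma$ shares the starting and ending vertices of $\alpha$, we get $1\le p\le n$; likewise $\beta$ is obtained from $\gamma$ by deleting one further internal vertex. Translating back to $\alpha$, there are distinct positions $r<s$ in $\{1,\dots,n\}$ such that $\beta$ is $\alpha$ with the vertices in positions $r$ and $s$ deleted, while $\gamma$ is one of $d_r(\alpha)$, $d_s(\alpha)$.

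The one elementary fact I would use repeatedly is that deleting a single internal vertex $w_k$ from an allowed elementary path $w_0\cdots w_m$ changes exactly one consecutive pair, replacing $w_{k-1}\to w_k\to w_{k+1}$ by $w_{k-1}\to w_{k+1}$; so the result is again an allowed elementary path precisely when $w_{k-1}\neq w_{k+1}$ and $w_{k-1}\to w_{k+1}$ is a directed edge of $G$. In each application below this last condition is read off from the allowedness and elementarity of $\gamma$ or of $\beta$.

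Now split into two cases. If $s\ge r+2$, I claim both $d_r(\alpha)$ and $d_s(\alpha)$ are allowed elementary paths, so that $\gamma'$ may be taken to be whichever of the two differs from $\gamma$, yielding (a). Say $\gamma=d_r(\alpha)$ (the subcase $\gamma=d_s(\alpha)$ is symmetric). Then $v_{r-1}\to v_{r+1}$ is a directed edge since $\gamma$ is allowed; and because $s\ge r+2$ the vertex $v_s$ keeps its neighbours $v_{s-1}$ and $v_{s+1}$ inside $\gamma$, so passing from $\gamma$ to $\beta$ introduces the consecutive pair $v_{s-1}\to v_{s+1}$, which is therefore a directed edge, with $v_{s-1}\neq v_{s+1}$, because $\beta$ is allowed and elementary. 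Hence $\gamma':=d_s(\alpha)$ is an allowed elementary path, it is distinct from $\gamma$ (distinct vertices of $\alpha$ are removed, and consecutive vertices of $\alpha$ are distinct), $\alpha>\gamma'$, and $\beta$ is exactly $\gamma'$ with its internal $\alpha$-position-$r$ vertex removed, so $\gamma'>\beta$; this is (a). If instead $s=r+1$, then $\beta$ is precisely $\alpha$ with the two subsequent vertices $v_r, v_{r+1}$ removed; these are joined by the directed edge $v_r\to v_{r+1}$, and $1\le r\le n-1$ because $r\ge 1$ and $r+1=s\le n$. This is exactly conclusion (b), with $i=r$.

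I do not anticipate a genuine obstacle; the only care required is the bookkeeping of vertex positions when moving between $\alpha$, $\gamma$, $\gamma'$ and $\beta$, and applying the single-deletion observation in the correct ambient path. The hypothesis that all three paths share their starting and ending vertices is what makes every deletion internal, hence guarantees $w_{k-1}$ and $w_{k+1}$ always exist. Finally, the reason (a) can fail in the adjacent case — and is then replaced by (b) — is that there the ``other'' candidate for $\gamma'$, namely $d_{r+1}(\alpha)$ or $d_r(\alpha)$, requires a shortcut edge, $v_r\to v_{r+2}$ or $v_{r-1}\to v_{r+1}$ respectively, which need not be present in $G$.
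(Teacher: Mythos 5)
Your proof is correct and follows essentially the same route as the paper's: write $\alpha=v_0\cdots v_{n+1}$, locate the two deleted internal positions, and split on whether they are adjacent, in the non-adjacent case swapping which of the two vertices $\gamma'$ removes and using the allowedness of $\beta$ (resp.\ $\gamma$) to supply the needed shortcut edge. Your version is slightly more careful than the paper's in also checking elementarity ($v_{s-1}\neq v_{s+1}$) and in justifying why the deletions must be internal, but the underlying argument is identical.
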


\begin{proof}
Without loss of generality,  we write $\alpha=v_0v_1\cdots v_{n+1}$ and $\beta=v_0\cdots \widehat{v_i}\cdots \widehat{v_j}\cdots v_{n+1}$. Since $\alpha$ and $\beta$ have the same start points and the same end points,  we have $1\leq i<  j\leq n$.

{\sc Case~1}. $ i<j-1$.

{\sc Subcase~1.1}.
$\gamma= v_0\cdots \widehat{v_i}\cdots  v_j \cdots v_{n+1}$.

Then we let $\gamma'= v_0\cdots v_i\cdots  \widehat{v_j} \cdots v_{n+1}$.  Since $\beta$ is an allowed elementary path and $i\neq j-1$,  we have that $v_{j-1}\to v_{j+1}$ is a directed path in $G$. Hence $\gamma'$ is an allowed elementary path.

{\sc Subcase~1.2}.
$\gamma= v_0\cdots v_i\cdots  \widehat{v_j} \cdots v_{n+1}$.

Then we let $\gamma'= v_0\cdots \widehat{v_i}\cdots  v_j \cdots v_{n+1}$.  Since $\beta$ is an allowed elementary path and $i+1\neq j$,  we have that $v_{i-1}\to v_{i+1}$ is a directed path in $G$. Hence $\gamma'$ is an allowed elementary path.

In both {\sc Subcase~1.1} and {\sc Subcase~1.2}, we can see that $\gamma'^{(n)}\neq \gamma^{(n)}$ and $\alpha > {\gamma'}  > \beta $.

{\sc Case~2}. $i=j-1$.

Then $\beta$ is obtained by removing two subsequent vertices $v_i\to v_{i+1}$ in $\alpha$ and $1\leq i\leq n-1$.

Summarising Case 1 and Case 2, the lemma follows.
\end{proof}

\begin{lemma}\label{le-1.4}
Suppose $\alpha^{(n+1)}> \gamma^{(n)} > \beta^{(n-1)}$ are allowed elementary paths on $G$.  The starting points  of  $\alpha^{(n+1)}$, $\gamma^{(n)}$  and $\beta^{(n-1)}$  are not all the same, or the end points of them are not all the same. %The start and end points of  $\alpha^{(n+1)}$, $\gamma^{(n)}$  and $\beta^{(n-1)}$ are not all the same.
Then either of the followings holds:
\begin{enumerate}[(a).]
\item
there exists an allowed elementary $n$-path $\gamma'^{(n)}\neq \gamma^{(n)}$ such that $\alpha > {\gamma'}  > \beta $;
\item
$\beta$ is obtained by removing $v_0\to v_{1}$ or $v_n\to v_{n+1}$  in $\alpha$ .%two subsequent vertices $v_i\to v_{i+1}$ in $\alpha$ where $0\leq i \leq n$.
\end{enumerate}
\end{lemma}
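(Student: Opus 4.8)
The plan is to mimic, with a boundary twist, the case analysis used for Lemma~\ref{le-1.3}. I would write $\alpha = v_0 v_1\cdots v_{n+1}$ and $\beta = v_0\cdots\widehat{v_i}\cdots\widehat{v_j}\cdots v_{n+1}$ with $i<j$; since $\gamma$ lies strictly between $\alpha$ and $\beta$ it is obtained from $\alpha$ by deleting exactly one of $v_i,v_j$. Comparing endpoints, the hypothesis that the starting points of $\alpha,\gamma,\beta$ are not all equal or the end points are not all equal says precisely that one of the two deleted vertices is an endpoint of $\alpha$, i.e.\ $i=0$ or $j=n+1$. I would first observe that reversing every arrow of $G$ and reading all paths backwards swaps these two possibilities while preserving allowedness and the face relation and carrying alternative (a) to (a) and (b) to (b); so it is enough to treat the case $i=0$.

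Next I would split on $j$. If $j=1$, then $\beta$ is $\alpha$ with the initial edge $v_0\to v_1$ removed, which is alternative (b), and there is nothing to prove. If $j\ge 2$, the key point is that there are exactly two $n$-paths obtained from $\alpha$ by a single deletion and still lying above $\beta$, namely the tail $v_1 v_2\cdots v_{n+1}$ (delete $v_0$) and the path $v_0\cdots v_{j-1}v_{j+1}\cdots v_{n+1}$ (delete $v_j$, read as $v_0 v_1\cdots v_n$ when $j=n+1$), and $\gamma$ is one of them. I would check that both are allowed: the tail is a final segment of the allowed path $\alpha$, the path $v_0 v_1\cdots v_n$ is an initial segment of $\alpha$, and $v_0\cdots v_{j-1}v_{j+1}\cdots v_{n+1}$ for $j\le n$ is allowed because $v_{j-1}$ and $v_{j+1}$ occur consecutively in the allowed path $\beta$, so $v_{j-1}\to v_{j+1}$ is a directed edge. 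Since the two candidates differ already in their first vertex ($v_1$ versus $v_0$, and $v_0\ne v_1$ as consecutive vertices of an allowed path are distinct), letting $\gamma'$ be the one that is not $\gamma$ produces an allowed $n$-path with $\gamma'\ne\gamma$ and $\alpha>\gamma'>\beta$, which is alternative (a). The mirror case $j=n+1$ follows by the same argument through the arrow reversal, with (b) there corresponding to deleting the terminal edge $v_n\to v_{n+1}$.

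I do not expect a genuine obstacle: this is the same ``either reroute through another middle face, or the deleted pair is forced to the boundary'' dichotomy as in Lemma~\ref{le-1.3}, except that here ``boundary'' means a terminal edge of $\alpha$ instead of an interior edge. The only point that needs care is the index bookkeeping — making sure that for every admissible $j$ the vertices $v_{j-1}$ and $v_{j+1}$ are genuinely consecutive in $\beta$, and that the two competing $n$-faces are well defined and distinct in the small cases (for instance, for $n=1$ the only non-terminal deleted pair is $\{v_0,v_2\}$, handled by interchanging the two edges of a triangle, while $\{v_0,v_1\}$ and $\{v_1,v_2\}$ fall under alternative (b)).
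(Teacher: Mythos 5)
Your proof is correct and takes essentially the same route as the paper's: both arguments fix the two deleted positions of $\alpha$, observe that the hypothesis forces one of them to be $v_0$ or $v_{n+1}$, and then either the deleted pair is a terminal edge (alternative (b)) or the other intermediate face is allowed --- as a prefix/suffix of $\alpha$, or because $v_{j-1}\to v_{j+1}$ is an edge since those vertices are consecutive in the allowed path $\beta$ --- and serves as $\gamma'$. Your arrow-reversal symmetry is only a cosmetic streamlining that folds the paper's mirror subcases (1.2, 2.2) into their counterparts (1.1, 2.1).
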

\begin{proof}
It is sufficient to consider the following two cases.

{\sc Case~1}.  $\alpha$  and $\gamma$ have the same start points and the same end points. Let $\alpha=v_0\cdots v_{n+1}$, $\gamma=v_0\cdots \widehat{v_i}\cdots v_{n+1}$ for some $1\leq i\leq n$.

{\sc Subcase~1.1}.  $\beta=\widehat{v_0}\cdots \widehat{v_i}\cdots v_{n+1}$.

Then we let $\gamma'= \widehat{v_0}v_1\cdots  \cdots v_{n+1}$.  Obviously, $\gamma'$ is an allowed elementary path and $\alpha>\gamma'>\beta$, $\gamma'\not=\gamma$.

{\sc Subcase~1.2}.  $\beta=v_0\cdots \widehat{v_i}\cdots \widehat{v_{n+1}}$.

Then we let $\gamma'=v_0v_1\cdots \widehat{v_{n+1}}$.  Obviously, $\gamma'$ is an allowed elementary path and $\alpha>\gamma'>\beta$, $\gamma'\not=\gamma$.

{\sc Subcase~1.3}.  $\beta=v_0\cdots \widehat{v_i}\cdots \widehat{v_j}\cdots{v_{n+1}}$ for  $1\leq i\not=j\leq n$.

Then $\alpha$, $\gamma$ and $\beta$ have the same start points and the same end points. This is impossible.

Summarizing {\sc Subcase~1.1}, {\sc Subcase~1.2} and {\sc Subcase~1.3},  we can see that $\gamma'^{(n)}\neq \gamma^{(n)}$ and $\alpha > {\gamma'}  > \beta $.

{\sc Case~2}.  The starting points of  $\alpha$ and $\gamma$  are different, or the end points of them are different.   %The start and end points of $\alpha$  and $\gamma$ are not all the same.

{\sc Subcase~2.1}. $\alpha=v_0\cdots v_{n+1}$, $\gamma=\widehat{v_0}\cdots v_i\cdots v_{n+1}$ and $\beta=v_1\cdots \widehat{v_i}\cdots v_{n+1}$ for some $1\leq i \leq{n+1}$. We separate Subcase~2.1 into the following subcases:

 \begin{itemize}
 \item
 $i={n+1}$.

 Then $\beta=v_1\cdots v_n$. Let  $\gamma'=v_0v_1\cdots  \widehat{v_{n+1}}$.  We have that $\gamma'$ is an allowed elementary path such that $\alpha>\gamma'>\beta$, $\gamma'\not=\gamma$.
  \item
  $2\leq i<{n+1}$.

  Then we let $\gamma'=v_0\cdots  \widehat{v_{i}}\cdots v_{n+1}$. Since $\beta$ is an allowed elementary path and $v_{i-1}\to v_{i+1}$ is  a directed path in $G$, $\gamma'$ is an allowed elementary path such that  $\alpha>\gamma'>\beta$ and $\gamma'\not=\gamma$.
  \item
  $i=1$.

  Then $\beta$ is obtained by removing two subsequent vertices $v_0\to v_1$ in $\alpha$.
  \end{itemize}

{\sc Subcase~2.2}.  $\alpha=v_0\cdots v_{n+1}$, $\gamma=v_0\cdots v_i\cdots \widehat{v_{n+1}}$ and $\beta=v_0v_1\cdots \widehat{v_i}\cdots v_{n}$ for some $0\leq i \leq{n}$. We separate Subcase 2.2 into the following subcases:

 \begin{itemize}
 \item
 $i=0$.

 Then $\beta=v_1\cdots v_n$. Let  $\gamma'=\widehat{v_0}v_1\cdots  v_{n+1}$.  Then $\gamma'$  is an allowed elementary path such that $\alpha>\gamma'>\beta$, $\gamma'\not=\gamma$.
  \item
  $0<i<{n}$.

  Then we let $\gamma'=v_0\cdots  \widehat{v_{i}}\cdots v_{n+1}$. Since $\beta$ is an allowed elementary path and $v_{i-1}\to v_{i+1}$ is  a directed path in $G$,  $\gamma'$ is an allowed elementary path such that  $\alpha>\gamma'>\beta$ and $\gamma'\not=\gamma$.
  \item
  $i=n$.

  Then $\beta$ is obtained by removing two subsequent vertices $v_n\to v_{n+1}$ in $\alpha$.
  \end{itemize}
%In both {\sc Subcase~2.1} and {\sc Subcase~2.2}, we can see that  either there exists an allowed elementary n-path $\gamma'^{(n)}\neq \gamma^{(n)}$ and $\alpha > {\gamma'}  > \beta $ or $\beta$ is obtained by removing two subsequent vertices $v_i\to v_{i+1}$ in $\alpha$  where $0\leq i \leq n$.

Summarising Case 1 and Case 2, the lemma follows.
\end{proof}

%By Lemma~\ref{le-1.3} and Lemma~\ref{le-1.4}, we have

\begin{proposition}\label{pr-1.1}
Suppose $\alpha^{(n+1)}> \gamma^{(n)} > \beta^{(n-1)}$ are allowed elementary paths on $G$.  Then either of the followings holds:
\begin{enumerate}[(a).]
\item
there exists an allowed elementary $n$-path $\gamma'^{(n)}\neq \gamma^{(n)}$ such that $\alpha > {\gamma'}  > \beta $;
\item
$\beta$ is obtained by removing two subsequent vertices $v_i\to v_{i+1}$ in $\alpha$ where $0\leq i \leq n$.
\end{enumerate}
\end{proposition}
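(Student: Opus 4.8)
The plan is to obtain Proposition~\ref{pr-1.1} directly from Lemmas~\ref{le-1.3} and~\ref{le-1.4} by a clean dichotomy on boundary behaviour. Recall that $\alpha^{(n+1)}>\gamma^{(n)}$ for allowed elementary paths means that $\gamma$ is obtained from $\alpha$ by deleting exactly one vertex and the result is still allowed; deleting an interior vertex preserves both the start point and the end point, deleting the first vertex moves the start point, and deleting the last vertex moves the end point. Hence, writing $\alpha=v_0v_1\cdots v_{n+1}$, in the chain $\alpha>\gamma>\beta$ the passage from $\alpha$ to $\beta$ removes two of the vertices $v_0,\dots,v_{n+1}$, and exactly one of the following holds:
\begin{enumerate}[(i).]
\item $\alpha$, $\gamma$ and $\beta$ all have the same start point and the same end point;
\item the start points of $\alpha,\gamma,\beta$ are not all equal, or their end points are not all equal.
\end{enumerate}
These are precisely the hypotheses of Lemma~\ref{le-1.3} and of Lemma~\ref{le-1.4}, respectively, so between them the two lemmas cover every possibility.

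In case~(i) I would simply quote Lemma~\ref{le-1.3}: its alternative~(a) is word for word alternative~(a) of the Proposition, and its alternative~(b) says that $\beta$ is obtained from $\alpha$ by removing two subsequent vertices $v_i\to v_{i+1}$ with $1\le i\le n-1$, which is a special case of alternative~(b) of the Proposition (the constraint $1\le i\le n-1$ forces $0\le i\le n$). In case~(ii) I would quote Lemma~\ref{le-1.4}: alternative~(a) again matches, and its alternative~(b) says $\beta$ is obtained by removing $v_0\to v_1$ or $v_n\to v_{n+1}$ from $\alpha$, i.e.\ by removing subsequent vertices $v_i\to v_{i+1}$ with $i=0$ or $i=n$ — once more a special case of alternative~(b) of the Proposition. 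Since $\{0\}\cup\{1,\dots,n-1\}\cup\{n\}=\{0,1,\dots,n\}$, the ``(b)''\ outcomes of the two lemmas together realise exactly the full range $0\le i\le n$ asserted in the Proposition, and in every case where neither lemma falls into its ``(b)''\ branch we are handed alternative~(a).

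I do not expect any genuine obstacle in this step: all of the combinatorial casework has already been carried out inside Lemmas~\ref{le-1.3} and~\ref{le-1.4}, and the only things one must verify are that the dichotomy~(i)/(ii) is exhaustive and mutually exclusive — immediate from the single-vertex-deletion description of the relation $>$ — and that the index ranges $1\le i\le n-1$ and $i\in\{0,n\}$ appearing in the second alternatives of the two lemmas glue to the range $0\le i\le n$ claimed in the Proposition. So the proof should amount to little more than stating the case split and citing the two lemmas.
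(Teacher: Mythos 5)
Your proposal is correct and follows exactly the route the paper takes: its proof of Proposition~\ref{pr-1.1} is simply to combine Lemma~\ref{le-1.3} and Lemma~\ref{le-1.4}, and your verification that the two hypotheses are exhaustive and that the index ranges $1\le i\le n-1$ and $i\in\{0,n\}$ glue to $0\le i\le n$ is just a more explicit writing-out of that one-line argument.
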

\begin{proof}
By Lemma~\ref{le-1.3} and Lemma~\ref{le-1.4}, the proposition follows directly.
\end{proof}

The following example shows that for any $\alpha^{(n+1)}> {\gamma}^{(n)} >\beta^{(n-1)}$, there may not exist an
allowed elementary path ${\gamma'}^{(n)}$ such that $\alpha^{(n+1)}> {\gamma'}^{(n)} >\beta^{(n-1)}$.
\begin{example}\label{ex-3}
 Let $V=\{v_0,v_1,v_2,v_3\}$. Let $G$ be a digraph with the set of vertices $V$ and
the set of directed edges $\{v_0\to v_1,v_0\to v_2, v_0\to v_3,v_1\to v_2, v_2\to v_3 \}$.
Let $\alpha^{(3)}=v_0\to v_1\to v_2\to v_3$, $\gamma^{(2)}=v_0\to v_2\to v_3$, and $\beta^{(1)}=v_0\to v_3$.
 \begin{center}
\begin{tikzpicture}[line width=1.5pt]
\coordinate [label=left:$v_0$]  (A) at (2,2);
\coordinate [label=left:$v_1$]  (B) at (2,4);
 \coordinate [label=right:$v_2$]  (C) at (4,4);
  \coordinate [label=right:$v_3$]  (D) at (4,2);
   \draw [line width=1.5pt] (A) -- (B);
 \draw[->] (2,2) -- (2,3.5);
   \draw [line width=1.5pt] (A) -- (C);
 \draw[->] (2,2) -- (3.5,3.5);
   \draw [line width=1.5pt] (A) -- (D);
 \draw[->] (2,2) -- (3.5,2);
  \draw [line width=1.5pt] (B) -- (C);
 \draw[->] (2,4) -- (3.5,4);
  \draw [line width=1.5pt] (C) -- (D);
 \draw[->] (4,4) -- (4,3.2);
\fill (2,2) circle (2.5pt) (2,4) circle (2.5pt)  (4,2) circle (2.5 pt)  (4,4) circle (2.5 pt);
\end{tikzpicture}
\end{center}
We have that there is no allowed elementary $2$-path $\gamma'^{(2)}\neq \gamma^{(2)}$ such that  $\alpha^{(3)}> {\gamma'}^{(2)} >\beta^{(1)}$, where $\beta^{(1)}$ is obtained   by removing two subsequent vertices $v_1\to v_2$ in $\alpha^{(3)}$.
%And as a sub-digraph,   $\beta^{(1)}$ is obtained   by removing two subsequent vertices $v_1\to v_2$ in $\alpha^{(3)$}.
\end{example}

\section{Discrete Morse functions on digraphs}\label{se-3}
In this section, we define the discrete Morse functions on digraphs and the critical  allowed elementary paths of
the discrete Morse functions.

\begin{definition}\label{def-1.1}
 A  map $f: V(G)\longrightarrow [0,+\infty)$ is called a  {\it discrete Morse funtion} on $G$, if for any allowed elementary path $v_0v_1\cdots v_n$ on $G$, both of the followings hold:
\begin{quote}
\begin{enumerate}[(i).]
\item
there exists at most one $v_i$ ($0\leq i\leq n$) with $f(v_i)=0$  such that $v_0\cdots \hat{v_i} \cdots v_n$ is an allowed elementary $(n-1)$-path on $G$; %(a). ,  (b).  $v_{i-1}\to v_{i+1}$ is a directed edge of $G$ if $1\leq i\leq n-1$;
\item
there exists at most one $u\in V(G)$ with  $f(u)=0$ such that for some $-1\leq j\leq n$,  %$u\neq v_j, v_{j+1}$ and
 $v_0\cdots v_j u v_{j+1} \cdots v_n$ (Specially, $j=-1$, $v_0\cdots v_j u v_{j+1} \cdots v_n=uv_0\cdots v_n$; $j=n$, $v_0\cdots v_j u v_{j+1} \cdots v_n=v_0\cdots v_n u$) is an allowed elementary $(n+1)$-path on $G$.   % for some $v_j$ with $0\leq j\leq n$, $u\neq v_j, v_{j+1}$, and both $v_j\to u$ and $u\to v_{j+1}$ are directed edges of $G$.
\end{enumerate}
\end{quote}
\end{definition}

In an allowed elementary path $v_0v_1\cdots v_n$, the vertex $v_i$ is {\it removable} if  $i=0$, or $i=n$, or  $1\leq i\leq n-1$ and $v_{i-1}\to v_{i+1}$ is a directed edge in $G$.  For example, in the digraph $\{v_0\to v_1, v_1\to v_2, v_0\to v_2, v_2\to v_0\}$, $v_1$ is removable in the path $v_0v_1v_2$,   while $v_2$ is not removable in the path $v_0v_2v_0$ or   the path $v_1v_2v_0$.    Definition~\ref{def-1.1}~(i) can be restated as follows:
\begin{quote}
(i).  in an allowed elementary path, at most one removable vertex has  zero value.
\end{quote}
For the allowed elementary path $v_0v_1\cdots v_n$, a vertex $u$ of $G$ is called {\it addable}, if $u\to v_0$ is a directed edge,  or $v_n\to u$ is a directed edge, or there exists $1\leq i\leq n$ such that both $v_{i-1}\to u$ and $u\to v_{i+1}$ are directed edges.
 Definition~\ref{def-1.1}~(ii) can be restated as follows:
 \begin{quote}
(ii).   for an allowed elementary path, at most one addable vertex  of  $G$ has  zero value.
\end{quote}

Let $f$ be a non-negative  function on $V(G)$.  For any allowed elementary path
$v_0v_1\cdots v_n$,  we define the value of $f$ on the path by letting
\begin{eqnarray}\label{eq-1.1}
f(v_0v_1\cdots v_n)= \sum_{i=0}^n f(v_i).
\end{eqnarray}
We use $\gamma^{(n)}$ (or $\gamma$ for short) to denote the allowed elementary $n$-path. For $0\leq i\leq n$, if  $v_0 \cdots \hat{v_i}\cdots v_n$ is   an allowed elementary $(n-1)$-path, then we write this $(n-1)$-path as $d_i \gamma$.  Note that $\hat{v_0} \cdots  {v_i}\cdots v_n$  and  $v_0 \cdots {v_i}\cdots \hat{v_n}$ are always allowed elementary $(n-1)$-paths. Hence we always have $d_0\gamma$ and $d_n\gamma$.
By (\ref{eq-1.1}),  Definition~\ref{def-1.1}~(i) can be  restated  as  follows:
\begin{quote}
(i).  there exists at most one allowed elementary $(n-1)$-path $\beta^{(n-1)}$ such that $f(\beta)=f(\gamma)$ and $d_i\gamma=\beta$ for some $0\leq i\leq n$;
\end{quote}
and Definition~\ref{def-1.1}~(ii) can be  restated  as  follows:
\begin{quote}
(ii).  there exists at most one allowed elementary $(n+1)$-path $\alpha^{(n+1)}$ such that $f(\alpha)=f(\gamma)$ and $d_i\alpha=\gamma$ for some $0\leq i\leq n+1$.
\end{quote}

For any allowed elementary paths $\gamma$ and $\gamma'$, if $\gamma'$ can be obtained from $\gamma$ by removing some vertices, then we write $\gamma'<\gamma$ or $\gamma>\gamma'$.
For any allowed elementary $n$-path $\gamma^{(n)}$ on $G$,  we can rewrite (i), (ii) in Definition~\ref{def-1.1}  equivalently as the following inequalities
\begin{quote}
\begin{enumerate}[(i).]
\item
$\#\Big\{\beta^{(n-1)}<\gamma^{(n)}\mid f(\beta)=f(\gamma)\Big\}\leq 1$;
\item
$\#\Big\{\alpha^{(n+1)}>\gamma^{(n)}\mid f(\alpha)=f(\gamma)\Big\}\leq 1$.
\end{enumerate}
\end{quote}
For an allowed elementary path $\gamma$, if in both (i) and (ii), the inequalities hold strictly, then $\gamma$ is called critical.  Precisely,
\begin{definition}\label{def-1.2}
An allowed elementary $n$-path $\gamma^{(n)}$ is called {\it critical}, if    both of the followings hold:
\begin{quote}
\begin{enumerate}[(i)']
\item
$\#\Big\{\beta^{(n-1)}<\gamma^{(n)}\mid f(\beta)=f(\gamma)\Big\}=0$,
\item
$\#\Big\{\alpha^{(n+1)}>\gamma^{(n)}\mid f(\alpha)=f(\gamma)\Big\}=0$.
\end{enumerate}
\end{quote}
\end{definition}

By Definition~\ref{def-1.2},  an allowed elementary $n$-path $\gamma^{(n)}$ on $G$ is {\bf non-critical}  if and only if either of the followings holds:
\begin{quote}
\begin{enumerate}[(i)'']
\item
there exists $\beta^{(n-1)}<\gamma^{(n)}$ such that $f(\beta)=f(\gamma)$;
\item
there exists $\alpha^{(n+1)}>\gamma^{(n)}$ such that $f(\alpha)=f(\gamma)$.
\end{enumerate}
\end{quote}

\section{Properties of Discrete Morse functions on Digraphs}\label{se-4}

 In this section, we  prove some auxiliary results and some additional properties about discrete Morse functions and the critical allowed elementary paths of $G$.

\subsection{Auxiliary results for main theorems}%\label{se-4}
A {\it directed loop} in $G$  is an allowed elementary path $v_0 v_1\ldots v_nv_0$ for $n\geq 1$ $^{[1]}$.

\footnotetext[1] {[1].  In \cite[Definition~4.3]{10}, a {\it loop} on a digraph $G$ is defined to be a based map from a line digraph to $G$ such that the start vertex and the end vertex and  the same.    Our {directed loop} here is different from the loop in \cite{10}. A directed loop is a special loop and the converse is not true. }

\begin{lemma}\label{le-1.11}
For any digraph $G$ and any discrete Morse function $f$ on $G$,  if $v_0 v_1\ldots v_nv_0$ ($n\geq 1$)  is a directed loop in $G$,  then $f(v_i)>0$ strictly for any $0\leq i\leq n$.
\end{lemma}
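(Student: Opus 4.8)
The plan is to argue by contradiction. Suppose some vertex of the loop, say $v_k$ with $0\le k\le n$, satisfies $f(v_k)=0$; I will exhibit an allowed elementary path violating property (i) of a discrete Morse function, i.e. the inequality form of Definition~\ref{def-1.1}(i).

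The first step is to record that a directed loop is invariant under cyclic rotation: if $v_0v_1\cdots v_nv_0$ is a directed loop, then for every $k$ the path
\[
L_k \;=\; v_kv_{k+1}\cdots v_nv_0v_1\cdots v_{k-1}v_k
\]
is again a directed loop, because every directed edge it requires is one of the edges of the original loop and consecutive vertices remain distinct. Thus, after replacing the loop by $L_k$, I may assume without loss of generality that the zero-valued vertex $v_k$ occupies both the first and the last slot of the loop, so that $L_k$ is an allowed elementary $(n+1)$-path.

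The second step deletes the two endpoints of $L_k$. Since removing the first or the last vertex of an allowed elementary path always produces an allowed elementary path, both
\[
d_0L_k=v_{k+1}\cdots v_nv_0\cdots v_{k-1}v_k
\qquad\text{and}\qquad
d_{n+1}L_k=v_kv_{k+1}\cdots v_nv_0\cdots v_{k-1}
\]
are allowed elementary $n$-paths strictly below $L_k$. A one-line computation with the additive value \eqref{eq-1.1}, using $f(v_k)=0$, gives $f(L_k)=f(d_0L_k)=f(d_{n+1}L_k)=\sum_{i=0}^n f(v_i)$; indeed the three sums differ only by copies of the vanishing term $f(v_k)$. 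Moreover $d_0L_k\neq d_{n+1}L_k$, since they begin with $v_{k+1}$ and $v_k$ respectively and $v_k\neq v_{k+1}$ (the edge $v_k\to v_{k+1}$ is not a diagonal). Hence $\#\{\beta^{(n)}<L_k^{(n+1)}\mid f(\beta)=f(L_k)\}\ge 2$, contradicting Definition~\ref{def-1.1}(i) applied to $\gamma=L_k$. This forces $f(v_i)>0$ for every $i$, proving the lemma (the case $n=1$, where $L_k$ is simply $v_kv_{1-k}v_k$, is included with no change).

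I do not anticipate a serious obstacle; the only point requiring care is the possibility of a ``degenerate'' loop in which some interior vertices coincide, so that $v_0,\dots,v_n$ need not be pairwise distinct. The rotation trick is designed precisely to sidestep this: it reduces the whole argument to deleting the two endpoints of $L_k$, an operation that is always legal irrespective of internal repetitions, and whose effect on \eqref{eq-1.1} is just the cancellation of the two copies of the zero term $f(v_k)$. No genericity of the loop is used anywhere, so the argument goes through verbatim for all $n\ge 1$.
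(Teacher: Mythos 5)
Your proof is correct and takes essentially the same route as the paper: the paper likewise rotates the loop so the zero-valued vertex occupies both endpoints (its $\beta$ is your $L_k$), removes the first and the last vertex to obtain two faces of equal $f$-value (its $\gamma'$ and $\gamma$), and derives a contradiction with Definition~\ref{def-1.1}(i). Your only addition is the explicit check that the two faces are distinct, which the paper leaves implicit.
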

\begin{proof}
Let $\alpha=v_0 v_1\ldots v_nv_0$  be  a directed loop in $G$.  Suppose to the contrary, $f(v_i)=0$ for some $0\leq i\leq n$.
Suppose $\beta=v_iv_{i+1}\cdots v_n v_0\cdots v_{i-1}v_i$, $\gamma=v_iv_{i+1}\cdots v_n v_0\cdots v_{i-1}$ and $\gamma'=v_{i+1}\cdots v_n v_0\cdots v_{i-1}v_i$. Then $\gamma <\beta, \gamma'<\beta$ and $f(\beta)=f(\gamma)=f(\gamma')$. This contradicts that $f$ is a discrete Morse function on $G$.  Therefore,   $f(v_i)>0$ strictly for any $0\leq i\leq n$.
\end{proof}

The next lemma follows from Lemma~\ref{le-1.11}.

\begin{lemma}\label{le-1.12}
Let $G$ be a digraph and $f$ a discrete Morse function on $G$.  Then any directed loop in $G$ is critical.
\end{lemma}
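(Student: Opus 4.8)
The plan is to derive the statement from Lemma~\ref{le-1.11} together with the trivial observation that, along an allowed elementary path, deleting a vertex of positive value strictly decreases the path's $f$-value while inserting one strictly increases it. Write the directed loop as $\gamma=v_0v_1\cdots v_nv_0$, an allowed elementary $(n+1)$-path, and recall from Lemma~\ref{le-1.11} that $f(v_i)>0$ for every $0\leq i\leq n$, so $f$ is strictly positive at every vertex occurring in $\gamma$. It then suffices to verify the two clauses of Definition~\ref{def-1.2} for $\gamma$.

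Clause~(i)$'$ is immediate: any $\beta^{(n)}<\gamma^{(n+1)}$ is obtained from $\gamma$ by deleting one vertex $w\in\{v_0,\dots,v_n\}$, so $f(\gamma)-f(\beta)=f(w)>0$ by Lemma~\ref{le-1.11}; hence $f(\beta)\neq f(\gamma)$ and the count in clause~(i)$'$ is $0$. For clause~(ii)$'$, suppose for contradiction that some allowed elementary $(n+2)$-path $\alpha>\gamma$ satisfies $f(\alpha)=f(\gamma)$. Then $\alpha$ is obtained from $\gamma$ by inserting a single vertex $u$, and $f(u)=f(\alpha)-f(\gamma)=0$; since every vertex of $\gamma$ has positive value, $u$ does not occur in $\gamma$. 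I would now split into cases according to the position of $u$ in the word $\alpha$. If $u$ is neither the first nor the last letter of $\alpha$, then $u$ has been spliced between two vertices that are consecutive in the cyclic edge sequence $v_0\to v_1\to\cdots\to v_n\to v_0$; since $\alpha$ is allowed, both the edge into $u$ and the edge out of $u$ at that spot belong to $G$, so splicing $u$ into the cycle yields an allowed elementary path of the form $v_0\cdots v_i\,u\,v_{i+1}\cdots v_nv_0$ (indices read cyclically, with $v_{n+1}:=v_0$), which is again a directed loop and contains $u$. Applying Lemma~\ref{le-1.11} to this enlarged loop gives $f(u)>0$, a contradiction.

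This settles every configuration except the two in which $u$ lies at an end of the word $\alpha$, namely $\alpha=u\,v_0v_1\cdots v_nv_0$ with $u\to v_0$, or $\alpha=v_0v_1\cdots v_nv_0\,u$ with $v_0\to u$; I expect these to be the main obstacle, because here $u$ falls off the cyclic part of $\gamma$, so no enlarged directed loop through $u$ is automatically available and Lemma~\ref{le-1.11} cannot be applied directly. My plan for the prepended case is to play the two clauses of Definition~\ref{def-1.1} against the allowed truncations $v_0v_1\cdots v_n$ and $v_1v_2\cdots v_nv_0$ of $\gamma$ and against the rotated loop $v_1v_2\cdots v_nv_0v_1$: the edge $u\to v_0$ already exhibits $u$ as a zero-valued vertex addable to the path $v_0v_1\cdots v_n$, and the task is to produce a second zero-valued addition to that path (or a second zero-valued removal from $u\,v_0v_1\cdots v_n$), which would violate Definition~\ref{def-1.1}; the appended case should be symmetric. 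Establishing that such a competitor always exists --- equivalently, ruling out the configuration $\alpha=u\,v_0\cdots v_nv_0$ outright --- is, I believe, the one genuinely delicate point of the proof; granting it, clause~(ii)$'$ holds and $\gamma$ is critical.
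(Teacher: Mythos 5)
Your argument for clause (i)$'$ and for internal insertions in clause (ii)$'$ is essentially the paper's own argument: the paper likewise uses Lemma~\ref{le-1.11} to kill (i)$''$, and then disposes of (ii)$''$ by asserting that the path obtained by inserting the zero-valued vertex $u$ is of the form $v_0\cdots v_i u v_{i+1}\cdots v_n v_0$ (with $v_{n+1}=v_0$), i.e.\ is again a directed loop, so that Lemma~\ref{le-1.11} gives $f(u)>0$. Where you differ is that you have correctly observed that this only covers insertions of $u$ \emph{between} two consecutive vertices of the cyclic sequence, and that the two end configurations $\alpha=u\,v_0\cdots v_nv_0$ and $\alpha=v_0\cdots v_nv_0\,u$ remain; the paper's proof passes over these cases in silence, even though its own reformulation of condition (ii) explicitly includes $d_0\alpha=\gamma$ and $d_{n+2}\alpha=\gamma$, i.e.\ prepending and appending.

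The leftover case is a genuine gap, and your proposed strategy for it --- producing a second zero-valued addable vertex so as to contradict Definition~\ref{def-1.1}(ii) --- cannot succeed in general, because the configuration you are trying to exclude is consistent with Definition~\ref{def-1.1}. Take $V=\{u,v_0,v_1\}$ with directed edges $u\to v_0$, $v_0\to v_1$, $v_1\to v_0$, and set $f(u)=0$, $f(v_0)=1$, $f(v_1)=2$. Since no edge enters $u$, the vertex $u$ occurs in an allowed elementary path at most once and only as the initial vertex, and it is the unique zero-valued vertex and can only ever be prepended (to paths starting at $v_0$); hence both conditions of Definition~\ref{def-1.1} hold and $f$ is a discrete Morse function, while Lemma~\ref{le-1.11} is vacuously respected because $u$ lies on no directed loop. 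Nevertheless $\alpha=uv_0v_1v_0$ is allowed, $\alpha>\gamma=v_0v_1v_0$ and $f(\alpha)=f(\gamma)=4$, so (ii)$''$ holds for the directed loop $\gamma$, which is therefore non-critical in the sense of Definition~\ref{def-1.2}. So the point you flagged as ``genuinely delicate'' is in fact an obstruction: with the paper's definitions of $>$ and of criticality, neither your argument nor the paper's closes the prepend/append case, and completing the proof requires either an additional hypothesis (ruling out zero-valued in-neighbours of the starting vertex and out-neighbours of the ending vertex of a loop) or a notion of criticality for loops that discards the faces $d_0$ and $d_{n+2}$.
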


\begin{proof}
Let $v_0 v_1\ldots v_nv_0$  be  an arbitrary directed loop in $G$.  Suppose to the contrary, the directed loop is non-critical.  Then  by Lemma~\ref{le-1.11},   (i)'' does not hold for the directed loop,  hence (ii)'' must hold for the directed loop. That is,  there exists
a vertex $u$ of $G$ such that   $f(u)=0$ and for some $0\leq i\leq n$, $v_0 v_1\ldots v_i u v_{i+1} \ldots v_nv_0$ is a directed loop in $G$  (here we use the notation $v_{n+1}=v_0$).  This contradicts  Lemma~\ref{le-1.11}.  Therefore,  the directed loop $v_0 v_1\ldots v_nv_0$ is critical.
\end{proof}

\begin{lemma}\label{le-1.13}
Let $G$ be a digraph and $f$  a discrete Morse function on $G$. Then for any allowed elementary path in $G$, there exists at most one index such that the corresponding  vertex is with zero value.
\end{lemma}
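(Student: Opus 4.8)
The plan is to argue by contradiction. Suppose $v_0 v_1 \cdots v_n$ is an allowed elementary path on $G$ and that there are two distinct indices $0 \le i < j \le n$ with $f(v_i) = f(v_j) = 0$. From this I would exhibit a single allowed elementary path on $G$ for which condition (i) of Definition~\ref{def-1.1} fails, contradicting the hypothesis that $f$ is a discrete Morse function on $G$.

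The path to use is the sub-path $\gamma = v_i v_{i+1}\cdots v_j$. It is an allowed elementary $(j-i)$-path, since it consists of consecutive vertices of $v_0\cdots v_n$, so each $v_{k-1}\to v_k$ with $i+1\le k\le j$ is a directed edge of $G$ and $v_{k-1}\ne v_k$. As noted in Section~\ref{se-3}, deleting the initial vertex always yields an allowed elementary path $d_0\gamma = v_{i+1}\cdots v_j$, and deleting the terminal vertex always yields an allowed elementary path $d_{j-i}\gamma = v_i\cdots v_{j-1}$; these two $(j-i-1)$-paths are distinct, because their initial vertices $v_{i+1}$ and $v_i$ are distinct consecutive vertices of $\gamma$. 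By the additivity~(\ref{eq-1.1}) of $f$ along paths together with $f(v_i) = f(v_j) = 0$,
\begin{eqnarray*}
f(d_0\gamma) = f(\gamma) - f(v_i) = f(\gamma), \qquad f(d_{j-i}\gamma) = f(\gamma) - f(v_j) = f(\gamma).
\end{eqnarray*}
Hence $\#\{\beta^{(j-i-1)} < \gamma^{(j-i)}\mid f(\beta) = f(\gamma)\} \ge 2$, which contradicts condition (i) of Definition~\ref{def-1.1} in its inequality form, applied to the allowed elementary path $\gamma$. This proves the lemma.

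I do not anticipate a real obstacle. The only subtlety is that the vertices of an allowed elementary path need not all be distinct (only consecutive ones are), so the argument must be phrased so that it works uniformly whether or not $v_i$ and $v_j$ coincide as vertices of $G$; the argument above does this, since it uses only $f(v_i) = f(v_j) = 0$ and the distinctness of the consecutive pair $v_i, v_{i+1}$. If one prefers a case split, the case $v_i = v_j$ can instead be isolated and handled directly by Lemma~\ref{le-1.11} (a directed loop has all vertices of strictly positive value, since $v_i\cdots v_{j-1}v_i$ would be a directed loop through the zero-value vertex $v_i$), but this splitting is not needed for the proof above.
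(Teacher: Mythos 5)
Your proof is correct and follows essentially the same route as the paper: both exhibit the subpath $v_i\cdots v_j$ and observe that deleting either endpoint yields two allowed elementary faces of equal $f$-value, violating condition (i) in its inequality form. The only difference is cosmetic: the paper splits into the cases $v_i\neq v_j$ and $v_i=v_j$ (handling the latter via the directed-loop Lemma~\ref{le-1.11}), whereas you observe that distinctness of the two faces already follows from $v_i\neq v_{i+1}$, so no case split is needed.
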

\begin{proof}
Let $\alpha=v_0\cdots v_n$ be an allowed elementary path. Suppose to the contrary, $f(v_i)=f(v_j)=0$, $i<j$. There are two cases.

{\sc Case~1}. $v_i\not=v_j$.

Then we have $\beta=v_i\cdots v_j$, $\gamma=v_{i+1}\cdots v_j$ and $\gamma'=v_i\cdots v_{j-1}$ such that $\beta>\gamma$, $\beta>\gamma'$ and $f(\beta)=f(\gamma)=f(\gamma')$. This contradicts that $f$ is a discrete Morse function on $G$.

{\sc Case~2}. $v_i=v_j$.

{\sc Subcase~2.1}. $j=i+1$.

Then we have $f(v_iv_j)=f(v_i)=f(v_j)=0$. This contradicts that $f$ is a discrete Morse function on $G$.

{\sc Subcase~2.2}. $j\geq {i+2}$.

Then $v_iv_{i+1}\cdots v_j$ is a directed loop with $f(v_i)=0$. By Lemma~\ref{le-1.11}, this  is impossible.

Therefore, the lemma follows.
\end{proof}

\begin{lemma}\label{le-1.14}
Let $f$ be a discrete Morse function on digraph $G$. Then for  any allowed elementary path in $G$,   (i)'' and (ii)'' cannot both be true.
\end{lemma}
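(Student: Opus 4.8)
The plan is to argue by contradiction. Fix an allowed elementary $n$-path $\gamma=\gamma^{(n)}$ on $G$ and suppose, contrary to the assertion, that both (i)'' and (ii)'' hold for $\gamma$. Then there exist allowed elementary paths $\beta=\beta^{(n-1)}<\gamma^{(n)}$ and $\alpha=\alpha^{(n+1)}>\gamma^{(n)}$ with $f(\beta)=f(\gamma)=f(\alpha)$; in particular $n\ge 1$, and we obtain a chain $\alpha^{(n+1)}>\gamma^{(n)}>\beta^{(n-1)}$ of allowed elementary paths all carrying the same value of $f$.

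The key step is to deduce, from the additivity~(\ref{eq-1.1}) of $f$ along paths together with $f\ge 0$, that two distinct vertices of $\alpha$ have value zero. Write $\alpha=\alpha_0\alpha_1\cdots\alpha_{n+1}$, which has $n+2$ vertices, while $\gamma$ has $n+1$ and $\beta$ has $n$. Since $\gamma$ has exactly one vertex fewer than $\alpha$, the path $\gamma$ is obtained from $\alpha$ by deleting a single vertex, say $\gamma=d_s\alpha$, so $f(\gamma)=f(\alpha)-f(\alpha_s)$; comparing with $f(\alpha)=f(\gamma)$ and using $f(\alpha_s)\ge 0$ forces $f(\alpha_s)=0$. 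Likewise $\beta$ is obtained from $\gamma$ by deleting a single vertex, which corresponds to a position $\alpha_t$ of $\alpha$ with $t\ne s$ (it is not the position already removed in passing from $\alpha$ to $\gamma$), and $f(\beta)=f(\gamma)-f(\alpha_t)=f(\gamma)$ forces $f(\alpha_t)=0$. The vertex count is exactly what guarantees that precisely two positions of $\alpha$ are deleted in passing to $\beta$ and that these positions $s,t$ are distinct.

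Finally, $\alpha$ is a single allowed elementary path having two distinct indices $s\ne t$ at which the corresponding vertices have zero value, which contradicts Lemma~\ref{le-1.13}. Note that Lemma~\ref{le-1.13} already covers the subcase where $\alpha_s$ and $\alpha_t$ happen to coincide as vertices, via Lemma~\ref{le-1.11} and the impossibility of a zero-value vertex on a directed loop, so no additional case analysis is needed here. Hence the supposition that (i)'' and (ii)'' both hold for $\gamma$ is untenable, and the lemma follows. I do not anticipate a genuine obstacle: the only point requiring care is the bookkeeping of deleted positions in the second paragraph, namely verifying that the two zero-value positions of $\alpha$ are genuinely distinct so that Lemma~\ref{le-1.13} can be invoked.
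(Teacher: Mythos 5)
Your argument is correct, but it is not the route the paper takes. The paper's proof invokes Proposition~\ref{pr-1.1} to split into two cases: either there is a second intermediate path $\tilde\gamma\neq\gamma$ with $\beta<\tilde\gamma<\alpha$, in which case a Forman-style argument (using Definition~\ref{def-1.1} for $\alpha$ and for $\beta$) forces the strict inequalities $f(\beta)<f(\tilde\gamma)<f(\alpha)$ and hence a contradiction with $f(\beta)=f(\alpha)$; or no such $\tilde\gamma$ exists, in which case $\beta$ is obtained from $\alpha$ by deleting two consecutive vertices, both of which must then have value zero, contradicting Lemma~\ref{le-1.13}. You bypass Proposition~\ref{pr-1.1} entirely: since $f$ on a path is the sum of the (nonnegative) vertex values, $f(\alpha)=f(\gamma)$ forces the position deleted in passing from $\alpha$ to $\gamma$ to carry value zero, and $f(\gamma)=f(\beta)$ forces the second, necessarily distinct, deleted position to carry value zero as well, so Lemma~\ref{le-1.13} applies directly with no case split. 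Your version is shorter and more elementary, exploiting the vertex-additivity of $f$ that is special to this digraph setting (and you are right that Lemma~\ref{le-1.13} is stated for indices, so coincidence of the two vertices causes no trouble); the paper's version stays closer to Forman's original cell-complex argument, which would survive in settings where the Morse function is not additively induced from vertex values. Both proofs ultimately rest on Lemma~\ref{le-1.13}, and neither introduces a circularity.
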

\begin{proof}
Let $\gamma=v_0v_1\cdots v_{n}$ be an allowed elementary path in $G$.  Suppose to the contrary, by Definition~\ref{def-1.1}, there must exist an allowed elementary $(n-1)$-path $\beta$ and an allowed elementary $(n+1)$-path $\alpha$  such that $\beta<\gamma<\alpha$,  $f(\beta)=f(\gamma)$  and $f(\alpha)=f(\gamma)$.  By Proposition~\ref{pr-1.1}, we consider the following cases.

{\sc Case~1}. There exists an allowed elementary $n$-path $\tilde{\gamma}\not=\gamma$ such that $\beta<\tilde{\gamma}<\alpha$.

Then similar to the proof of \cite[Lemma~2.5]{forman1}, by Definition~\ref{def-1.1}, we have
\begin{eqnarray*}
f(\beta)<f(\tilde{\gamma}), f(\tilde{\gamma})<f(\alpha).
\end{eqnarray*}
Thus $f(\gamma)=f(\beta)<f(\tilde{\gamma})<f(\alpha)=f(\gamma)$ which is a contradiction.

{\sc Case~2}.  There does not exist any allowed elementary $n$-path $\tilde{\gamma}\not=\gamma$ such that $\beta<\tilde{\gamma}<\alpha$.

Then $\beta$ must be obtained by removing two subsequent vertices $v_i\to v_{i+1}$ in $\alpha$ where $0\leq i \leq n$. Hence,  $f(v_i)=f(v_{i+1})$ in $\alpha$. This contradicts with Lemma~\ref{le-1.13}.

Therefore,  (i)'' and (ii)'' cannot both be true.
%Let $\gamma=v_0v_1\cdots v_{n}$ be an allowed elementary path.  If both (i)'' and (ii)'' are both true for $\gamma$, then by Definition~\ref{def-1.1}(i), there must exist a $(n-1)$-allowed elementary path $\beta=d_i(\alpha)=v_0\cdots \hat{v_i}\cdots v_n$ such that $f(v_i)=0$ and $f(\alpha)=f(\beta)$. Moreover,
%by Definition~\ref{def-1.1}(ii),  there  exists one vertex $u\in V(G)$ with $f(u)=0$ such that $\alpha=v_0\cdots v_j u v_{j+1}\cdots v_n$ is a $(n+1)$-allowed elementary path and $f(\alpha)=f(\gamma)$. Hence, there are two  indices in $\alpha$ satisfying that the corresponding vertices    are  zero-points of $f$.
%Therefore, there are two vertices with zero value in $\alpha$.
%This contradicts  with Lemma~\ref{le-1.13}. Therefore,  (i)'' and (ii)'' cannot both be true.
\end{proof}

We call an allowed elementary path $v_0v_1\ldots v_n$ {\it simplicial} if all the vertices $v_0,v_1,\ldots, v_n$ are distinct.  For each $n\geq 0$, let $S_n(G)$ be the collection of all the formal linear combinations of  simplicial allowed elementary $n$-paths in $G$.  Then $S_n(G)$ is a sub-$R$-module of $P_n(G)$.
The {\it  concatenation} of a $p$-path $\alpha=v_0v_1\cdots v_p$ and  a $q$-path $\beta=w_0w_1\cdots w_q$ is
\begin{eqnarray*}
\alpha*\beta=\left\{
\begin{array}{cc}
v_0v_1\cdots v_p w_1\cdots w_q, & \text{ if  } v_p=w_0, \\
0, & \text{ if } v_p\not=w_0.
\end{array}
\right.
\end{eqnarray*}

\begin{lemma}\label{le-4.1}
\begin{enumerate}[(a).]
\item
Any allowed elementary path $\alpha$ in $G$ is concatenations of simplicial allowed elementary paths $\beta_i$ and directed loops $\gamma_i$:
\begin{eqnarray}\label{eq-2020-7-9}
\alpha=\beta_1*\gamma_1*\beta_2*\gamma_2*\ldots * \beta_{k-1}*\gamma_{k-1}* \beta_k.
\end{eqnarray}
Here we allow each $\beta_i$ to be a single vertex in which case the corresponding concatenations would be trivial. Moreover, (\ref{eq-2020-7-9}) is unique under a certain algorithm.%Furthermore, assume all simplicial $$
\item
Let $\alpha$ be non-critical. Then in (\ref{eq-2020-7-9}),  there exists one $\beta_i$ which is non-critical. % Moreover,  $\alpha$ is non-critical if and only if there exists one $\beta_i$ which is non-critical.
 \end{enumerate}
\end{lemma}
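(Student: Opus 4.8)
The two parts require different arguments: part~(a) is a greedy decomposition, while part~(b) tracks the ``defect'' that makes $\alpha$ non-critical into one of the simplicial blocks, exploiting the rigidity of the loop blocks.

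For part~(a) the plan is to fix the following algorithm and read off the decomposition from it. Write $\alpha=v_0v_1\cdots v_n$. If all vertices are distinct, stop: $\alpha$ is itself the single simplicial block and $k=1$. Otherwise let $j$ be the least index for which $v_j$ equals an earlier vertex; then $v_0,\dots,v_{j-1}$ are pairwise distinct, so there is a unique $i<j$ with $v_i=v_j$, and $j\ge i+2$ because consecutive vertices of an allowed elementary path are distinct. Set $\beta_1=v_0\cdots v_i$, which is simplicial by minimality of $j$, and $\gamma_1=v_iv_{i+1}\cdots v_j$, which is a directed loop, being a consecutive sub-path of the allowed elementary path $\alpha$ whose two endpoints coincide and which has $j-i\ge 2$ vertices after the first. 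Then $\alpha=\beta_1*\gamma_1*\alpha'$ with $\alpha'=v_jv_{j+1}\cdots v_n$ strictly shorter, and we recurse on $\alpha'$. The concatenations are nontrivial because each loop block shares its (single) endpoint with each adjacent simplicial block, the recursion terminates since the suffix strictly shortens, and every choice in the procedure is forced, so the output is uniquely determined --- which is what the uniqueness assertion means.

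For part~(b) assume $\alpha$ is non-critical; the case in which $\alpha$ is a single vertex is trivial, so let $n\ge 1$. By the characterization of non-critical paths, either (i)$''$ some removable vertex of $\alpha$ has zero value, or (ii)$''$ there is $u\in V(G)$ with $f(u)=0$ such that $\alpha$ with $u$ inserted at some consecutive position (or prepended, or appended) is allowed elementary; by Lemma~\ref{le-1.14} exactly one of these holds, although we will not need that. The structural input is Lemma~\ref{le-1.11}: no vertex of a directed loop has zero value, hence every vertex of every $\gamma_i$ --- in particular every junction vertex joining consecutive blocks --- is positive. In case (i)$''$, the removable zero-value vertex $v_\ell$ therefore lies in a single simplicial block $\beta_i$ and is not a junction; if $v_\ell$ is interior to $\beta_i$ its $\beta_i$-neighbours coincide with its $\alpha$-neighbours, and if $v_\ell$ is an endpoint of $\beta_i$ it must be $v_0$ with $i=1$ or $v_n$ with $i=k$; in either event $v_\ell$ is removable in $\beta_i$, and deleting it produces an allowed elementary path below $\beta_i$ of equal $f$-value, so $\beta_i$ satisfies (i)$''$. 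In case (ii)$''$, every consecutive pair of $\alpha$ lies inside a single block (consecutive blocks meet in only one vertex); if the insertion site lay inside some $\gamma_i$, inserting $u$ there would yield a directed loop containing the zero-value vertex $u$, contradicting Lemma~\ref{le-1.11}, and the front and back of $\alpha$ are the front of $\beta_1$ and the back of $\beta_k$; hence the insertion occurs inside, at the front of, or at the back of some $\beta_i$, and the resulting one-vertex extension of $\beta_i$ has equal $f$-value, so $\beta_i$ satisfies (ii)$''$. In both cases some $\beta_i$ is non-critical.

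I expect the main obstacle to be the bookkeeping in part~(b): one must rule out the non-criticality defect being ``absorbed'' by a loop block --- which is exactly where the rigidity of directed loops (Lemma~\ref{le-1.11}) is used --- and then handle the junction vertices and the possibly trivial blocks $\beta_i$ carefully, so as to pin removability, respectively addability, onto the correct $\beta_i$. Part~(a) is routine once the greedy algorithm is fixed.
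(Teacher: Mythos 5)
Your proof is correct and takes essentially the same route as the paper's: the identical greedy first-repetition algorithm for part~(a), and for part~(b) the same split into cases (i)$''$ and (ii)$''$, using Lemma~\ref{le-1.11} and Lemma~\ref{le-1.13} to force the zero-value removable (resp.\ addable) vertex out of the loop blocks and into some $\beta_i$. You merely spell out in more detail than the paper why removability/addability in $\alpha$ transfers to removability/addability in that $\beta_i$ (junction vertices, endpoints, single-vertex blocks), which is an elaboration rather than a different argument.
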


\begin{proof}
Firstly,  we write $\alpha$ as a sequence of vertices $v_0 v_1 \ldots v_n$.  If for each $v_i$ and  each  $0\leq j\leq i-1$, $v_i$ is different from   $v_j$,  then $\alpha$ is simplicial.  Thus we can let $\alpha=\beta_1$.  Otherwise,  we let $v_{i_1}$ to be the first vertex such that there exists   $0\leq j_1\leq i_1-1$  satisfying $v_{i_1}=v_{j_1}$.     Let $\beta_1=v_0v_1\ldots v_{j_1}$ and $\gamma_1=v_{j_1}v_{j_1+1}\ldots v_{i_1}$.  Then $v_0v_1\ldots v_{i_1}= \beta_1 *\gamma_1$.  Apply the same argument to $v_{i_1} v_{i_1+1} \ldots v_n$.  Since $n$ is finite, by induction,   $\alpha$ can be uniquely written as concatenations $\beta_1*\gamma_1*\beta_2*\gamma_2*\ldots * \beta_{k-1}*\gamma_{k-1}* \beta_k$ for some $k$ under this algorithm. Hence (a) follows.

Secondly, if $\alpha$ be non-critical, we consider two cases.

{\sc Case~1}. (i)'' holds  for $\alpha$.

Then by Lemma~\ref{le-1.14}, (ii)'' does not hold for $\alpha$. By Lemma~\ref{le-1.12} and Lemma~\ref{le-1.13}, there exists unique  vertex with zero value which belongs to some  $\beta_i$  in (\ref{eq-2020-7-9}) and does not belong to any directed loop. Hence $\beta_i$ is non-critical.

{\sc Case~2}. (i)'' does not hold  for $\alpha$.

Then by Lemma~\ref{le-1.14}, (ii)'' holds  for $\alpha$.  By Lemma~\ref{le-1.12} and Lemma~\ref{le-1.13},  there exists  unique vertex  $u\in V(G)$ with zero value  such that $u$ is addable in $\alpha$, but not addable in any directed loop. Thus $u$ is addable in some $\beta_i$, which makes $\beta_i$ non-critical.

Combining above cases, (b) follows.
\end{proof}

By Lemma~\ref{le-4.1},  it is  direct to see that  any digraph must satisfy one of the following conditions:
\begin{enumerate}[(A).]
\item
For each allowed elementary path $\alpha$, all $\beta_i$ in (\ref{eq-2020-7-9}) are simplicial  allowed elementary paths  satisfying that each vertex of $\beta_i$ belongs to one of the directed loops in (\ref{eq-2020-7-9});
\item
For an allowed elementary path $\alpha'$, there exists a $\beta'_i$ in (\ref{eq-2020-7-9}) satisfying that at least one vertex of $\beta_i$ does not belong to any directed loop in (\ref{eq-2020-7-9}).
\end{enumerate}

Then we claim that
\begin{proposition}\label{prop-4.11}
For  digraphs satisfying the condition (A),  all discrete Morse functions are trivial; For digraphs satisfying the condition (B), there are not only trivial functions, but also nontrivial functions.
\end{proposition}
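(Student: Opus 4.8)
The plan is to first recast conditions (A) and (B) in terms of directed loops, settle case (A) by a one-line appeal to Lemma~\ref{le-1.11}, and then, for case (B), exhibit a concrete nontrivial discrete Morse function. Two preliminary observations will be useful. First, any function $f\colon V(G)\to(0,+\infty)$ taking no zero value is automatically a discrete Morse function, since conditions (i) and (ii) of Definition~\ref{def-1.1} only constrain vertices of zero value and so hold vacuously; hence \emph{every} digraph admits trivial discrete Morse functions, and for a digraph of type (B) the only real content is the existence of a \emph{non}trivial one, i.e.\ one with a zero value. Second, by Lemma~\ref{le-4.1}(a) the factors $\gamma_j$ in the canonical decomposition~(\ref{eq-2020-7-9}) are directed loops, and conversely a directed loop is its own such $\gamma_1$; so a vertex of $G$ lies on a directed loop exactly when it occurs in one of the loop factors of the decomposition of some allowed elementary path. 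In these terms condition (A) says that every vertex of $G$ lies on a directed loop, and (B) that some vertex does not. Case (A) is then immediate: for any discrete Morse function $f$ and any vertex $v$, pick a directed loop of $G$ through $v$; Lemma~\ref{le-1.11} forces $f$ to be strictly positive on all of its vertices, so $f(v)>0$, and $f$ is trivial.

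For case (B), fix a vertex $v_\ast$ that lies on no directed loop, and define $f(v_\ast)=0$ and $f(u)=1$ for every $u\neq v_\ast$. This $f$ is nontrivial, so it remains only to verify that it is a discrete Morse function, and the whole verification reduces to two claims about the loop-free vertex $v_\ast$: (a) $v_\ast$ occurs at most once in any allowed elementary path, and (b) $v_\ast$ is addable to any allowed elementary path in at most one position. For (a): two occurrences at positions $i<j$ of an allowed path force $j\ge i+2$ (consecutive vertices of an allowed path are distinct), and then the subpath from position $i$ to position $j$ is a directed loop $w_0 w_1\cdots w_m w_0$ through $v_\ast$ with $m\ge1$, which is impossible. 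For (b): two distinct admissible insertion positions for $v_\ast$ in a path $v_0\cdots v_n$ fall into four configurations --- two interior positions, one interior position together with the front, one interior position together with the end, or the front together with the end --- and in each one the arrows into and out of $v_\ast$ supplied by addability, concatenated with the arc of the path lying between the two positions, form an allowed elementary loop $w_0 w_1\cdots w_m w_0$ with $w_0=v_\ast$ and $m\ge1$, again contradicting that $v_\ast$ lies on no directed loop. Granting (a) and (b): $v_\ast$ is the only vertex of zero value, so Definition~\ref{def-1.1}(i) holds (at most one index of a given path carries a zero value), and Definition~\ref{def-1.1}(ii) holds because $v_\ast$ can be inserted into a given path in at most one position --- so that (ii) holds in both the form stated in Definition~\ref{def-1.1} and the equivalent $(n+1)$-path form used thereafter. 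Thus $f$ is a discrete Morse function, and the two halves of the proposition are established.

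The step I expect to be the main obstacle is (b): one has to check, for each of the four configurations of two insertion positions, that the arrows guaranteed by addability, together with the intervening arc of the ambient path, genuinely close up into a bona fide allowed elementary loop $w_0\cdots w_m w_0$ with $m\ge1$ --- with the boundary insertion positions $j=-1$ and $j=n$ in Definition~\ref{def-1.1}(ii) being the fussiest bookkeeping --- so that Lemma~\ref{le-1.11} may be invoked. A lesser but necessary point is the loop-theoretic reformulation of conditions (A) and (B) via Lemma~\ref{le-4.1}(a), which is what links the combinatorial description~(\ref{eq-2020-7-9}) to the positivity statement of Lemma~\ref{le-1.11}.
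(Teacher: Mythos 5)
Your proof is correct and follows the same route as the paper: case (A) is dispatched by Lemma~\ref{le-1.11}, and case (B) by assigning the value $0$ to a single vertex lying on no directed loop of $G$ and positive values to all other vertices. The only difference is that the paper merely asserts ``by Definition~\ref{def-1.1}'' that this function is a discrete Morse function, whereas you carry out that verification explicitly (your claims (a) and (b), each reduced to Lemma~\ref{le-1.11}); this supplies detail the paper omits rather than taking a different approach.
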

\begin{proof}
Let $G$ be a digraph.  If $G$ satisfies the condition (A), then by Lemma~\ref{le-1.11}, each vertex of $G$ can only  be  assigned a positive real number. Hence, all discrete Morse functions on $G$ are trivial here.

If $G$ satisfies the condition (B),  we can  define trivial functions on $G$ firstly. In addition,  we can assign $0$ to one vertex which does not belong to any directed loop in (\ref{eq-2020-7-9}), and assign positive real values to other vertices on $G$.  By Definition~\ref{def-1.1}, we know that the function defined in this way is a discrete Morse function on $G$. That is, for this case, there are not only trivial functions on $G$, but also nontrivial functions.
\end{proof}

%{\sc Case~1}. For an allowed elementary path $\alpha'$, there exists a $\beta_i$ in (\ref{eq-2020-7-9}) satisfying at least one vertex $v\in \beta_i$ does not belong to any directed loop in (\ref{eq-2020-7-9}). Then we can assign $0$ to $v$, and assign positive real values to other vertices on $G$. By Definition~\ref{def-1.1}, we know that the function defined above is a discrete Morse function on $G$. That is, for this case, there are not only trivial functions on $G$, but also nontrivial functions.
%\begin{lemma}\label{le-4.11}
%On any digraph $G$ there always exists a discrete Morse function.
%\end{lemma}
%\begin{proof}
%If $G$ is a directed loop, by Lemma~\ref{le-1.11}, we can define the discrete Morse function on $G$ by assigning a positive real number to each vertex of $G$. Otherwise, by Lemma~\ref{le-4.1},  there exists an allowed elementary path
%\begin{eqnarray*}
%\alpha=\beta_1*\gamma_1*\beta_2*\gamma_2*\ldots * \beta_{k-1}*\gamma_{k-1}* \beta_k.
%\end{eqnarray*}
%such that not each $\beta_i$ is the simplicial  allowed elementary path  with only one vertex, and each $\gamma_i$ is  a directed loop. Without loss of  generality, we assume $\beta_i$ is a simplicial  allowed elementary $n$-path ($n\geq 1$). Then we can assign $0$ to a vertex of $\beta_i$  that does not belong to any  directed loop, and assign positive real values to other vertices on $G$. By Definition~\ref{def-1.1}, we know  that the function defined above is a discrete Morse function on $G$. The lemma follows.
%\end{proof}

\begin{example}\label{ex-4.1}
Let $\alpha=v_0v_1v_2v_3v_4v_3v_5v_4v_2$ be an allowed elementary path of $G$. Then it can be written as $\alpha=\beta_1*\gamma_1*\beta_2$ where $\beta_1=v_0v_1v_2v_3$, $\gamma_1=v_3v_4v_3$ and $\beta_2=v_3v_5v_4v_2$, or $\alpha=\beta'_1*\gamma'_1*\beta'_2$ where $\beta'_1=v_0v_1v_2v_3v_4$, $\gamma'_1=v_4v_3v_5v_4$ and $\beta'_2=v_4v_2$. But under the algorithm of Lemma~\ref{le-4.1}(a), it can  be written as $\beta_1*\gamma_1*\beta_2$ uniquely.
\end{example}

\smallskip

\subsection{Additional properties of the discrete Morse functions}

 Let $G'$ be a sub-digraph of $G$.
\begin{itemize}
\item
Suppose there is   a  discrete Morse function  $f$  on $G$.  Then  by Definition~\ref{def-1.1},   by  defining $f'(v)=f(v)$ for any  $v\in V(G')$,   $f$  gives a discrete Morse function $f'$  on $G'$.
 Hence
 \begin{quote}
 {\it The restriction of a discrete Morse function to a sub-digraph is still a discrete Morse function. }
 \end{quote}
\item
Suppose there is a discrete Morse function $f'$ on $G'$.   The next example shows that $f'$ may {\bf not} be extendable to be a discrete Morse function on $G$.
\end{itemize}

\begin{example}
Let $V=\{v_0,v_1,v_2, v_3\}$.  Let $G'$ be a digraph with the set of vertices $V$  and the set of directed edges $\{v_0\to v_1, v_0\to v_3,  v_1\to v_2\}$.  Let $f'$ be a function on $V$  given by $f'(v_2)=f'(v_3)=0$ and $f'(v_0)=1, f'(v_1)=2$.    Then $f'$ is a discrete Morse function on $G'$.  Let $G=\{V(G),E(G)\cup \{v_0\to v_2\}\}$.% be the digraph with the set of vertices $V$  and the set of directed edges $\{v_0\to v_1, v_0\to v_2, v_0\to v_3,  v_1\to v_2\}$.
Then $G'$ is a sub-digraph of $G$.
However, $f'$ is not a discrete Morse function on $G$.  Since $V(G)=V(G')$,   there does not exist any discrete Morse function $f$ on $G$ such that the restriction of $f$ to $G'$ equals $f'$.
\end{example}

For each $n\geq 0$,  let $\text{Crit}_n(G)$ be the free $R$-module consisting of all the formal linear combinations of critical allowed elementary $n$-paths on $G$.  Then $\text{Crit}_n(G)$ is a   sub-$R$-module of $P_n(G)$.

\begin{proposition}\label{prop-1.15}
Let $G$, $G'$ both be digraphs such that $G'\subseteq G$. Let $f$ be a discrete Morse function on  $G$ and $f'=f\mid_{G'}$. Then  for each $n \geq 0$, $\text{Crit}_n(G)\cap P(G')\subseteq \text{Crit}_n(G')$.
\end{proposition}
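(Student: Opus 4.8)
The plan is to argue by contraposition and to exploit that every ingredient of Definition~\ref{def-1.2} --- being an allowed elementary path, the value of $f$ on a path, and being obtained from another path by removing or inserting one vertex --- behaves monotonically under enlarging a digraph. Concretely, I would show: if an allowed elementary $n$-path $\gamma$ belongs to $P_n(G')$ and is \emph{non-critical} for $(G',f')$ (note $f'$ is itself a discrete Morse function on $G'$, as observed just before this proposition), then $\gamma$ is already non-critical for $(G,f)$. By the restatement of non-criticality through conditions (i)'' and (ii)'' recorded after Definition~\ref{def-1.2}, together with Definition~\ref{def-1.2} itself, this is exactly the asserted inclusion $\text{Crit}_n(G)\cap P_n(G')\subseteq \text{Crit}_n(G')$.

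Two elementary observations carry the argument. First, since $V(G')\subseteq V(G)$ and $E(G')\subseteq E(G)$, any path that is allowed on $G'$ is allowed on $G$; hence, if $\beta^{(n-1)}<\gamma^{(n)}$ holds on $G'$ --- that is, $\beta$ is allowed on $G'$ and arises from $\gamma$ by deleting one vertex --- the same $\beta$ witnesses $\beta^{(n-1)}<\gamma^{(n)}$ on $G$, and likewise any allowed $\alpha^{(n+1)}>\gamma^{(n)}$ on $G'$ witnesses $\alpha^{(n+1)}>\gamma^{(n)}$ on $G$. Second, since $f'=f\mid_{G'}$, formula~(\ref{eq-1.1}) gives $f'(\delta)=\sum_i f'(v_i)=\sum_i f(v_i)=f(\delta)$ for every allowed elementary path $\delta=v_0\cdots v_m$ on $G'$, so the equation $f'(\beta)=f'(\gamma)$ is the same as $f(\beta)=f(\gamma)$, and $f'(\alpha)=f'(\gamma)$ is the same as $f(\alpha)=f(\gamma)$.

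Assembling these: suppose, for contradiction, that $\gamma\in \text{Crit}_n(G)\cap P_n(G')$ but $\gamma\notin \text{Crit}_n(G')$. Then (i)'' or (ii)'' holds for $\gamma$ on $(G',f')$. If (i)'' holds, choose $\beta^{(n-1)}<\gamma^{(n)}$ allowed on $G'$ with $f'(\beta)=f'(\gamma)$; by the two observations, $\beta$ is allowed on $G$, satisfies $\beta^{(n-1)}<\gamma^{(n)}$ on $G$, and has $f(\beta)=f(\gamma)$, so $\#\{\beta^{(n-1)}<\gamma^{(n)}\mid f(\beta)=f(\gamma)\}\geq 1$ on $G$ --- contradicting condition (i)' of Definition~\ref{def-1.2} for $\gamma$ on $(G,f)$. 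If instead (ii)'' holds, the identical reasoning with an allowed $(n+1)$-path $\alpha^{(n+1)}>\gamma^{(n)}$ on $G'$ contradicts condition (ii)'. Hence $\gamma\in \text{Crit}_n(G')$, which proves the proposition.

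I do not expect a genuine obstacle here: the whole point is that ``allowed'' can only become easier to satisfy when arrows are added, so passing from $G'$ to the larger $G$ may create new witnesses of non-criticality but never eliminate old ones. The one thing to be careful about is therefore purely bookkeeping --- checking that the combinatorial ``$<$'' and ``$>$'' relations and the additive definition of $f$ on paths transfer verbatim between $G'$ and $G$. The same asymmetry is why only one containment is claimed: the reverse inclusion has no reason to hold, since the extra arrows of $G$ may realize deletions or insertions that are unavailable inside $G'$.
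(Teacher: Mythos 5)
Your proof is correct and is essentially the same argument as the paper's: both rest on the two observations that any witness $\beta<\gamma$ or $\alpha>\gamma$ allowed in $G'$ remains allowed in $G$, and that $f'=f\mid_{G'}$ preserves the path values; the paper just phrases it directly (criticality in $G$ forces strict inequalities, which restrict to $G'$) rather than contrapositively.
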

\begin{proof}

Let $\alpha=v_0\cdots v_n$ be a  critical allowed elementary path in $G$. Assume $\alpha$ is also an allowed elementary path  in $G'$. Then for any $0\leq i\leq n$, if $d_i(\alpha)$ is allowed in $G'$, we have that $f'(d_i(\alpha))=f(d_i(\alpha))<f(\alpha)=f'(\alpha)$ by Definition~\ref{def-1.2}(i)''. Moreover, for any $u\in V(G')$, if $v_0\cdots v_j u v_{j+1}\cdots v_n$ is an allowed elementary path in $G'$, by Definition~\ref{def-1.2}(ii)'', we have that
\begin{eqnarray*}
f'(v_0\cdots v_j u v_{j+1}\cdots v_n)=f(v_0\cdots v_j u v_{j+1}\cdots v_n)>f(\alpha)=f'(\alpha).
\end{eqnarray*}
This implies the assertion.
\end{proof}

The next example  shows that the inverse of Proposition~\ref{prop-1.15} may  not  be true.
\begin{example}\label{ex-2}
Let $V=\{v_0,v_1,v_2\}$.  Let $G$ be a digraph with the set of vertices $V$ and
the set of directed edges $\{v_0\to v_1, v_1\to v_2, v_0\to v_2\}$. Let $f$ be a function on $V$ given by
$f(v_1)=0$ and $f(v_0)=1$, $f(v_2)=2$. Then $f$ is a discrete Morse function on $G$ and $\alpha={v_0v_1v_2}$ is not critical in $G$.
Let $G'$ be the digraph with the set of vertices $V$ and the set of directed edges $\{v_0\to v_1, v_1\to v_2\}$.  Then $G'$ is a sub-digraph of $G$ and $f'=f\mid_{G'}$ is a discrete Morse function on $G'$. However, $\alpha={v_0v_1v_2}$ is critical in $G'$.
\end{example}

\smallskip

\section{Discrete gradient vector fields and Morse complex}\label{se-6}
In this section, we define the discrete gradient vector fields on digraphs and prove that it is an acyclic matching.  Based on this, we give the proof of Theorem~\ref{th-0.1}.
\subsection{Discrete gradient vector fields on digraphs}

 Let $G$ be a digraph.  Let $f: V(G)\longrightarrow [0,+\infty)$ be a discrete Morse function on $G$.  For any $n\geq 0$ and any allowed elementary paths $\alpha^{(n)}<\beta^{(n+1)}$ on $G$,  if   $f(\alpha)=f(\beta)$,  then we assign a pair $(\alpha,\beta)$.   By collecting all such pairs,  we obtain a {\it partial  matching} $\mathcal{M}(G,f)$.  We call $\mathcal{M}(G,f)$ the {\it (combinatorial) discrete gradient vector field} of $f$. The properties of a discrete Morse function imply that each allowed elementary path of $G$ is in at most one pair of $\mathcal{M}(G,f)$.

For each $n\geq 0$,   by the (combinatorial) discrete gradient vector field $\mathcal{M}(G,f)$, we can construct the (algebraic) discrete gradient vector field $\text{grad}f$ which is an $R$-linear map from $P_n(G)$  to $P_{n+1}(G)$.   For an allowed elementary $n$-path  $\alpha^{(n)}$ on $G$,  if there exists an allowed elementary $(n+1)$-path $\beta^{(n+1)}$  such that  $(\alpha,\beta)\in \mathcal{M}(G,f)$,   then we set
\begin{eqnarray*}
(\text{grad}f)(\alpha)=-\langle\partial\beta,\alpha\rangle\beta.
\end{eqnarray*}
    If there does not exist such $\beta$, then we set
  \begin{eqnarray*}
  (\text{grad} f)(\alpha)=0.
  \end{eqnarray*}
  We extend $\text{grad} f$ linearly over $R$ and obtain   an $R$-linear map  $\text{grad} f: P_n(G)\longrightarrow P_{n+1}(G)$.
   We  call $\text{grad} f$ the {\it (algebraic) discrete gradient vector field} of $f$.
By Lemma~\ref{le-1.14} and  a similar argument with \cite[Theorem~6.3~(1)]{forman1},  it follows
\begin{eqnarray*}
\text{grad} f\circ \text{grad} f=0.
\end{eqnarray*}

\smallskip

\subsection{Morse complex and path homology}
Let $R$ be an arbitrary commutative ring with a unit. Let $G$ be a digraph. Consider the chain complex
\begin{eqnarray*}
\cdots\stackrel{\partial_{n+2}}{\longrightarrow}\Omega_{n+1}(G)\stackrel{\partial_{n+1}}{\longrightarrow}\Omega_{n}(G)\stackrel{\partial_{n}}{\longrightarrow}\cdots
\end{eqnarray*}
For each $n\geq0$, choose a basis $B_n(G)$ for $\Omega_n(G)$. For $b\in B_n(G)$ and $a\in B_{n-1}(G)$, define $\langle\partial_{n}b,a\rangle$ to be the coefficients of the term $a$ in the linear combination $\partial_{n}b$. Then
\begin{eqnarray*}
\partial_{n}b=\sum_{a\in B_{n-1}(G)}\langle\partial_{n}b,a\rangle a.
\end{eqnarray*}
\begin{definition}(cf. \cite[Definition~1.1]{chain})
A {\it partial matching} $\mathcal{M}\subseteq {\bigcup_{n\geq 1}B_{n-1}(G)\times B_n(G)}$ is a collection of pairs $(a,b)$ such that $a\in B_{n-1}(G)$, $b\in B_n(G)$ for some $n$, and $\langle\partial_{n}b,a\rangle$ is invertible in $R$. Write $m(b>a)=\langle\partial_{n}b,a\rangle$ for $(a,b)\in \mathcal{M}$.
\end{definition}

For each $n\geq 0$, consider the subsets of $B_n(G)$
\begin{eqnarray*}
\mathcal{U}(B_n(G))&=&\{b\in B_n(G)\mid {\text{there exists}~ a\in B_{n-1}(G)~\text{such that}~(a,b)\in \mathcal{M}}\},\\
\mathcal{D}(B_n(G))&=&\{a\in B_n(G)\mid {\text{there exists}~ b\in B_{n+1}(G)~\text{such that}~(a,b)\in \mathcal{M}}\},\\
\mathcal{C}(B_n(G))&=& B_n(G)\setminus \big(\mathcal{U}(B_n(G))\bigcup \mathcal{D}(B_n(G))\big).
\end{eqnarray*}
Given $b\in B_n(G)$ and $a\in B_{n-1}(G)$, an {\it alternating path} is a sequence
\begin{eqnarray*}
b>a_1<b_1>a_2<b_2>a_3\cdots>a_k<b_k>a
\end{eqnarray*}
such that for each $i=1,2,\cdots,k$, $(a_i,b_i)\in\mathcal{M}$. For an alternating path $p$, we write $p^{\bullet}=b$, $p_{\bullet}=a$ and define
\begin{eqnarray*}
m(p)=(-1)^k\frac{m(b>a_1)m(b_1>a_2)\cdots m(b_k>a)}{m(b_1>a_1)m(b_2>a_2)\cdots m(b_k>a_k)}.
\end{eqnarray*}

\begin{definition} (cf. \cite[Definition~1.2]{chain})
\label{def-5.1}
A partial matching $\mathcal{M}$ is called acyclic, if there does not exist any cycle
\begin{eqnarray*}
a_1<b_1>a_2<b_2>a_3< \cdots>a_k<b_k>a_{1}
\end{eqnarray*}
with $k\geq 2$ and all $b_i\in \bigcup(B_{n}(G))$ (for any $n\geq 1$) are distinct.% $\{\alpha <\beta\}\in \mathcal{M}$, $\beta_i > \alpha_{i+1}\not=\alpha_i$, $ r \geq 0$ and $\alpha_0=\alpha_{r+1}$ .
\end{definition}

Let $\mathcal{M}$ be an acyclic partial matching.
\begin{definition}(cf. \cite[Definition~1.4]{chain})\label{def-6.1}
The  Morse complex is defined as
\begin{eqnarray*}
\cdots\stackrel{\partial_{n+2}^{\mathcal{M}}}{\longrightarrow}C_{n+1}^{\mathcal{M}}(B_*(G))
\stackrel{\partial_{n+1}^{\mathcal{M}}}{\longrightarrow}C_{n}^{\mathcal{M}}(B_*(G))\stackrel{\partial_{n}^{\mathcal{M}}}{\longrightarrow}C_{n-1}^{\mathcal{M}}(B_*(G))
\stackrel{\partial_{n-1}^{\mathcal{M}}}{\longrightarrow}\cdots,
\end{eqnarray*}
where the $R$-module $C_n^{\mathcal{M}}(B_*(G))$  is freely generated by the elements of $\mathcal{C}(B_n(G))$,  and the boundary map is given by $\partial_n^{\mathcal{M}}(b)=\sum_{p}m(p)p_{\bullet}$ for all alternating paths $p$ with $p^{\bullet}=b$.
\end{definition}

An {\it  atom chain complex} is a chain complex $\cdots\longrightarrow 0 \longrightarrow R \stackrel{\text{id}}{\longrightarrow} R \longrightarrow 0\longrightarrow \cdots$ where the only nontrivial modules are in the dimensions $d$ and $d-1$, and the boundary map is the identity map.  Such an atom chain complex is denoted by $\text{Atom}(d)$ (cf. \cite[P.870]{chain}).

\begin{lemma}(cf. \cite[Theorem~2.1]{chain},  \cite[Theorem~2.2]{minimal})
\label{le-8.1}
Assume that we have a free chain complex with a basis $(\Omega_*, B_*)$,  and an acyclic matching $\mathcal{M}$. Then
\begin{enumerate}[(a).]
\item
$\Omega_*$ decomposes as a direct sum of chain complexes $C_*^{\mathcal{M}}(B_*(G))\bigoplus T_*$, where $T_*\simeq {\bigoplus_{(a,b)\in \mathcal{M}}\text{Atom}(\text{dim}b)}$;
\item
$H_*(\Omega_*)=H_*(C_*^{\mathcal{M}}(B_*(G)))$.
\end{enumerate}
\qed
\end{lemma}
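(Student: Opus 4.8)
The plan is to treat (a) as the substantive statement and read off (b) from it. Each $\text{Atom}(d)$ is acyclic, since its homology is $\ker(\mathrm{id}_R)=0$ in degree $d$ and $R/\mathrm{im}(\mathrm{id}_R)=0$ in degree $d-1$, and homology commutes with direct sums of chain complexes; so (a) immediately gives $H_*(\Omega_*)\cong H_*(C_*^{\mathcal{M}}(B_*(G)))\oplus H_*(T_*)=H_*(C_*^{\mathcal{M}}(B_*(G)))$. I would prove (a) by induction on the number of pairs of $\mathcal{M}$, peeling off one atom summand at each step. The base case $\mathcal{M}=\emptyset$ is trivial: then $\mathcal{C}(B_n(G))=B_n(G)$ for all $n$, so $C_*^{\mathcal{M}}(B_*(G))=\Omega_*$ and $T_*=0$.

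For the inductive step I would first use acyclicity to choose a pair that can be removed cleanly. Form the directed graph $\Gamma$ with vertex set $\bigcup_n\mathcal{U}(B_n(G))$ and an arrow $b\to b'$ whenever $b\neq b'$ and the $\mathcal{M}$-partner of $b'$ lies in the support of $\partial b$; a directed cycle in $\Gamma$ is exactly a cycle in the sense of Definition~\ref{def-5.1}, so $\Gamma$ is acyclic, and (being finite, as $\mathcal{M}$ is finite) it has a source $b_0$. Write $(a_0,b_0)\in\mathcal{M}$, $n_0=\dim b_0$, $c_0=m(b_0>a_0)$, a unit in $R$. Since $b_0$ is a source, $\langle\partial b',a_0\rangle=0$ for every up-matched $b'\in B_{n_0}(G)$ with $b'\neq b_0$. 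Now perform the elementary base change $b'\mapsto\tilde b':=b'-c_0^{-1}\langle\partial b',a_0\rangle\,b_0$ for all $b'\in B_{n_0}(G)\setminus\{b_0\}$ together with $a_0\mapsto\tilde a_0:=\partial_{n_0}b_0$ in degree $n_0-1$, leaving every other basis element fixed; this is a legitimate change of basis because $c_0$ is invertible. By construction $\partial b_0=\tilde a_0$, and a short computation using $\partial\partial=0$ shows that in the new basis no element has $b_0$ in the support of its boundary and no element other than $b_0$ has $\tilde a_0$ in the support of its boundary. Hence $\Omega_*=\text{Atom}(n_0)\oplus D_*$ as chain complexes, where $\text{Atom}(n_0)$ is spanned by $b_0,\tilde a_0$ and $D_*$ by the remaining new basis $B'_*$.

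It then remains to see that $\mathcal{M}':=\mathcal{M}\setminus\{(a_0,b_0)\}$ is an acyclic matching on $(D_*,B'_*)$ whose surviving Morse complex is (isomorphic to) $C_*^{\mathcal{M}}(B_*(G))$. Because $b_0$ is a source of $\Gamma$, the base change leaves every up-matched basis element fixed and leaves the $D_*$-boundary of every degree-$n_0$ up-matched element unchanged, so each pair of $\mathcal{M}'$ is still a legitimate pair and the alternating graph of $(D_*,\mathcal{M}')$ is just $\Gamma$ with the vertex $b_0$ deleted, hence still acyclic. Applying the inductive hypothesis to $(D_*,B'_*,\mathcal{M}')$ gives $D_*=C_*^{\mathcal{M}}(B_*(G))\oplus T'_*$ with $T'_*\simeq\bigoplus_{(a,b)\in\mathcal{M}'}\text{Atom}(\dim b)$, and combining this with the first splitting yields (a) with $T_*=\text{Atom}(n_0)\oplus T'_*\simeq\bigoplus_{(a,b)\in\mathcal{M}}\text{Atom}(\dim b)$. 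The step I expect to be the real obstacle is the identification of the surviving summand: one must check that the cumulative effect of all the base changes on the critical part is governed precisely by the alternating-path formula of Definition~\ref{def-6.1}, i.e. that the correction terms introduced along the way add up exactly to the sums $\sum_{p}m(p)p_{\bullet}$ over alternating paths, with acyclicity being exactly what guarantees these sums are finite. This combinatorial bookkeeping is carried out in \cite[Theorem~2.1]{chain} and \cite[Theorem~2.2]{minimal}; since $\Omega_*(G)$ is nothing more than a free chain complex with a chosen basis, their argument applies to it verbatim, so in the present paper it is legitimate simply to invoke those references.
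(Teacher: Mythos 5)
Your proposal is correct, but there is essentially nothing in the paper to compare it against: the authors state Lemma~\ref{le-8.1} with a tombstone and no argument, relying entirely on the citations to Kozlov and to J\"ollenbeck--Welker. What you have written is a faithful reconstruction of the standard proof of algebraic discrete Morse theory from those sources. The Gaussian-elimination step is sound: picking a source $b_0$ of the acyclicity digraph, changing basis by $b'\mapsto b'-m(b_0>a_0)^{-1}\langle\partial b',a_0\rangle b_0$ in degree $n_0$ and $a_0\mapsto\partial b_0$ in degree $n_0-1$, and splitting off $\mathrm{Atom}(n_0)$ all works --- invertibility of $m(b_0>a_0)$ is exactly what the definition of a partial matching provides, the vanishing of the $b_0$-coefficient of $\partial c$ for $c$ one degree higher follows from $\partial\partial=0$ as you indicate, and the source condition is precisely what keeps the up-matched elements, their boundaries, and hence the residual matching $\mathcal{M}'$ untouched, so the induction closes. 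You are also right to flag the identification of the surviving boundary with the alternating-path formula of Definition~\ref{def-6.1} as the one genuinely delicate point, and deferring that bookkeeping to the cited theorems is exactly what the paper itself does (for the whole lemma, not just that step). One small caveat worth making explicit: the existence of a source requires the relevant part of the matching (or at least the acyclicity digraph in each degree) to be finite or well-founded; this is automatic for the finite digraphs considered in the paper, but the lemma as stated concerns an abstract free chain complex, so the hypothesis should be acknowledged.
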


%\begin{lemma}\cite[Remark~2]{chain}\label{le-8.2}
%$H_*(\Omega_*)=H_*(C_*^{\mathcal{M}}(B_*(G)))$.
%\end{lemma}

To prove Theorem~\ref{th-0.1}, we first prove the next lemma to show that $\mathcal{M}(G,f)$  is acyclic.
\begin{lemma}\label{le-4.2}
Let $G$ be a digraph and $f$ a discrete Morse function on $G$. Then $\mathcal{M}(G,f)$ is an acyclic matching.
\end{lemma}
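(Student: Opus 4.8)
The plan is to argue by contradiction: suppose $\mathcal{M}(G,f)$ admits a cycle
$$
a_1<b_1>a_2<b_2>a_3<\cdots>a_k<b_k>a_{1}
$$
with $k\geq 2$, all $b_i$ distinct, and $(a_i,b_i)\in\mathcal{M}(G,f)$ for each $i$. Here each $b_i=b_i^{(n+1)}$ is an allowed elementary $(n+1)$-path, each $a_i=a_i^{(n)}$ is an allowed elementary $n$-path, $a_i<b_i$ means $a_i=d_{j}b_i$ for some index $j$, and $f(a_i)=f(b_i)$ because the pair lies in $\mathcal{M}(G,f)$. The key numerical observation is the analogue of \cite[Lemma~2.5]{forman1}, already invoked in the proof of Lemma~\ref{le-1.14}: because $f$ is a discrete Morse function, for a non-critical situation the ``at most one'' clauses in Definition~\ref{def-1.1} force strict inequalities elsewhere. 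Concretely, first I would establish the following two facts. (1) If $a<b$ with $(a,b)\in\mathcal{M}(G,f)$, i.e. $f(a)=f(b)$, then for every other allowed elementary $n$-path $a'<b$ with $a'\neq a$ we have $f(a')<f(b)$; this is clause (i) of Definition~\ref{def-1.1} applied to $b$. (2) Symmetrically, if $a<b_i$ is the matched face and $a<b'$ for some allowed elementary $(n+1)$-path $b'\neq b_i$, then $f(b')>f(a)$; this is clause (ii) of Definition~\ref{def-1.1} applied to $a$, together with Lemma~\ref{le-1.14} which guarantees the two clauses are not simultaneously active.

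Next I would chase the cycle. Consider the step $b_i>a_{i+1}<b_{i+1}$ (indices mod $k$). Since $a_{i+1}<b_i$ and the pair $(a_i,b_i)$ is in $\mathcal{M}(G,f)$, while $a_{i+1}\neq a_i$ — this non-equality is exactly what Definition~\ref{def-5.1} of an alternating/cyclic path requires at a genuine cycle, and I would spell out why $a_{i+1}=a_i$ is impossible using distinctness of the $b_i$ and the acyclicity bookkeeping — fact (1) gives $f(a_{i+1})<f(b_i)=f(a_i)$. On the other hand $a_{i+1}<b_{i+1}$ with $(a_{i+1},b_{i+1})\in\mathcal{M}(G,f)$, so $f(a_{i+1})=f(b_{i+1})=f(a_{i+1})$; combining, $f(a_{i+1})=f(b_{i+1})$ and $f(a_{i+1})<f(a_i)$. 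Thus along the cycle
$$
f(b_1)=f(a_1)>f(a_2)=f(b_2)>f(a_3)=\cdots>f(a_k)=f(b_k)>f(a_1)=f(b_1),
$$
a strictly decreasing chain that returns to its start, which is absurd. Hence no such cycle exists and $\mathcal{M}(G,f)$ is acyclic.

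The main obstacle, and the place I would spend the most care, is justifying the strictness $a_{i+1}\neq a_i\Rightarrow f(a_{i+1})<f(b_i)$ in full rigor rather than by analogy with Forman. One must check that $a_{i+1}$ really is a ``removable-vertex'' face $d_j b_i$ distinct from the matched face $d_{j_0}b_i=a_i$, so that clause (i) of Definition~\ref{def-1.1} applies verbatim, and one must also handle the edge case $k=2$ where the cycle is $a_1<b_1>a_2<b_2>a_1$ and the indices collapse — here distinctness of $b_1,b_2$ is what prevents degeneracy and I would note that $a_1\neq a_2$ follows since otherwise $a_1<b_1$ and $a_1<b_2$ with $b_1\neq b_2$ and $f(b_1)=f(b_2)=f(a_1)$ would violate clause (ii) of Definition~\ref{def-1.1}. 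Beyond that subtlety, the argument is a direct transcription of the classical acyclicity proof to the digraph setting, with Lemma~\ref{le-1.14} supplying the one structural input (that $f$ cannot be ``tight'' in both directions at once) that the path-homology context requires.
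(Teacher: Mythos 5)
Your proof is correct, and it shares the paper's overall frame: assume a cycle $\alpha_1<\beta_1>\alpha_2<\beta_2>\cdots>\alpha_k<\beta_k>\alpha_1$ and dispose of the degenerate case $\alpha_{i+1}=\alpha_i$ exactly as the paper does, via Definition~\ref{def-1.1}(ii) and the distinctness of the $\beta_i$. The decisive step, however, is genuinely different. You run the classical Forman--Kozlov descent: since $\alpha_{i+1}$ is an unmatched face of $\beta_i$, clause~(i) together with $f\geq 0$ gives $f(\alpha_{i+1})<f(\beta_i)=f(\alpha_i)$, and going once around the loop yields $f(\alpha_1)<f(\alpha_1)$. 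The paper instead stops after a single step: $f(\alpha_1)=f(\beta_1)$ forces $\beta_1$ to contain a zero-valued vertex $u_1$, unique by Lemma~\ref{le-1.13}; the assumption $\alpha_2\neq\alpha_1$ forces $u_1\in V(\alpha_2)$; and then $\beta_2>\alpha_2$ with $f(\beta_2)=f(\alpha_2)$ would give $\beta_2$ two zero-valued vertices, contradicting Lemma~\ref{le-1.13}. So the paper shows a would-be cycle cannot even complete its second matched pair, while your argument only contradicts the closing of the loop. Both are sound. Yours is the more portable argument --- it uses nothing but the two ``at most one'' clauses and nonnegativity of $f$, so it transcribes verbatim to any matching arising from such a function --- whereas the paper's exploits the digraph-specific structure that a matched pair differs by exactly one zero vertex and that an allowed elementary path carries at most one such vertex. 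One point worth making explicit when you write this up: the strictness $f(a')<f(b)$ for an unmatched face $a'$ requires not only clause~(i) (at most one face attains equality) but also $f\geq 0$ (which gives $f(a')\leq f(b)$ for every face); you invoke this only implicitly through the analogue of Forman's Lemma~2.5, and the same combination is what the paper uses inside Lemma~\ref{le-1.14}.
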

\begin{proof}
By considering the value of $f$ at each vertex of $G$, we separate the proof into two cases.

{\sc Case~1}. $f(v)>0$ for any vertex $v\in V(G)$.

 Then we have $\mathcal{M}(G,f)=\emptyset$.

 {\sc Case~2}. There exists a vertex $v\in V(G)$ such that  $f(v)=0$.

 Then $\mathcal{M}(G,f)$ is a nonempty finite set. That is, for each $n\geq 0$, there exist finite pairs $\{\alpha^{(n)},\beta^{(n+1)}\}$ such that $\alpha^{(n)}<\beta^{(n+1)}$ and $f(\alpha^{(n)})=f(\beta^{(n+1)})$.

%For any $(\alpha^{(n)},\beta^{(n+1)})\in \mathcal{M}(G,f)$,  %Since $\alpha<\beta$ and $f(\alpha)=f(\beta)$, $\beta$ is non-critical. By Lemma~\ref{le-1.14}, (ii)'' does not hold for $\beta$. By Lemma~\ref{le-4.1}(b), there exists one  simplicial allowed elementary path $\beta_i\subseteq \beta$ in the expression of $\beta$ such that $\beta_i$ is non-critical. That is to say, the unique vertex $v\in \beta$  with zero value   is a vertex of $\beta_i$. Moreover, by Lemma~\ref{le-1.12}, $v$ is not the vertex of any directed loop in $\beta$.

Suppose to contrary, by Definition~\ref{def-5.1}, there exists a cycle
\begin{eqnarray*}
\alpha_1^{(n)}<\beta_1^{(n+1)}>\alpha_2^{(n)}<\beta_2^{(n+1)}>\alpha_3^{(n)}< \cdots>\alpha_k^{(n)}<\beta_k^{(n+1)}>\alpha_{1}^{(n)}
\end{eqnarray*}
with $k\geq 2$,  all $\beta_i^{(n+1)}$  are distinct and $(\alpha_i^{(n)}<\beta_i^{(n+1)})\in \mathcal{M}(G,f)$ ($1\leq i\leq k$) for an integer $n\geq 0$.
Since  $(\alpha_i^{(n)},\beta_i^{(n+1)})\in \mathcal{M}(G,f)$, $\beta_i^{(n+1)}$ is non-critical  for each $1\leq i\leq k$. By Lemma~\ref{le-1.14}, (ii)'' does not hold for $\beta_i^{(n+1)}$.  That is, there exists one vertex $u_i$ of $\beta_i^{(n+1)}$ such that $f(u_i)=0$  for each $1\leq i\leq k$.  We consider the following two subcases.

{\sc Subcase~1}. $\alpha_2^{(n)}=\alpha_1^{(n)}$.

Then by Definition~\ref{def-1.1}(ii), we have that $\beta_1^{(n+1)}=\beta_2^{(n+1)}$. This contradicts with that all $\beta_i^{(n+1)}$ are distinct.

{\sc Subcase~2}. $\alpha_2^{(n)}\not=\alpha_1^{(n)}$.

Then $V(\alpha_2^{(n)})=V(\beta_1^{(n+1)})\setminus \{\text{a nonzero vertex of}~\beta_1^{(n+1)}\}$ and $u_1\in V(\alpha_2^{(n)})$.
By Lemma~\ref{le-1.13}, since $\alpha_2^{(n)}<\beta_2^{(n+1)}$, it follows that $f(\alpha_2^{(n)})<f(\beta_2^{(n+1)})$.  This contradicts with $(\alpha_2^{(n)},\beta_2^{(n+1)})\in \mathcal{M}(G,f)$.

Summarising Case 1 and Case 2, we have that $\mathcal{M}(G,f)$ is  acyclic.
\end{proof}

Now we prove Theorem~\ref{th-0.1}.
\begin{proof}[Proof of Theorem~\ref{th-0.1}]
%Firstly, it is worthwhile to point out that on any digraph there is  a  trivial discrete Morse function which assigns to each vertex  a positive real number.
Firstly, by Proposition~\ref{prop-4.11}, we have that on any digraph there always exists a discrete Morse function.
 Secondly, since $G$ is transitive, by Proposition~\ref{prop-2.1}, we  have $\Omega_n(G)=P_n(G)$ for each $n\geq 0$. Hence, all allowed elementary paths form a basis of the chain complex $\{\Omega_*(G),\partial_*\}$.  Therefore, by Lemma~\ref{le-4.2}, by taking the allowed elementary paths as a basis of $\Omega_*(G)$, there always exists an acyclic matching. Hence by Lemma~\ref{le-8.1}(a), (\ref{eq-5.1})  follows.

Moreover, by Lemma~\ref{le-8.1}(b), the homology groups of $\Omega_{\ast}(G)$ and $C_{*}^{\mathcal{M}}(B_*(G))$ are isomorphic. Thus (\ref{eq-5.2}) follows.
%and \cite[Theorem~2.1, 2.2]{forman3}.
\end{proof}
%\begin{example}\label{ex-6.1}
%Let $G$  be a {\it triangle} whose vertex set three distinct vertices $v_0,v_1,v_2$ as
%\end{example}

In addition, we can get the following theorem.
\begin{theorem}\label{th-4.4}
Let $G$ be a digraph containing neither triangle nor square. Then  both (\ref{eq-5.1}) and (\ref{eq-5.2}) hold.
\end{theorem}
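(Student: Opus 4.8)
\textbf{Proof proposal for Theorem~\ref{th-4.4}.}

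The plan is to reduce Theorem~\ref{th-4.4} to the machinery already built for Theorem~\ref{th-0.1}. The only place transitivity was used in the proof of Theorem~\ref{th-0.1} was through Proposition~\ref{prop-2.1}, to conclude $\Omega_n(G)=P_n(G)$ so that the allowed elementary $n$-paths form an honest basis of the chain complex $\{\Omega_*(G),\partial_*\}$; once such a basis is in hand, Lemma~\ref{le-4.2} supplies an acyclic matching (coming from $\mathcal{M}(G,f)$ for a discrete Morse function $f$, which exists by Proposition~\ref{prop-4.11}) and Lemma~\ref{le-8.1} delivers both \eqref{eq-5.1} and \eqref{eq-5.2}. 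So the first and only real step is: \emph{show that if $G$ contains neither a triangle nor a square, then $\Omega_n(G)=P_n(G)$ for every $n\geq 0$}. Everything after that is a verbatim repetition of the proof of Theorem~\ref{th-0.1}, with Proposition~\ref{prop-2.1} replaced by this new observation.

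To prove $\Omega_n(G)=P_n(G)$ under the ``no triangle, no square'' hypothesis I would argue as follows. We always have $\Omega_n(G)\subseteq P_n(G)$ by \eqref{eq-2.1}, so the content is the reverse inclusion: every allowed elementary $n$-path $\alpha=v_0v_1\cdots v_n$ has $\partial_n\alpha\in P_{n-1}(G)$. Now $\partial_n\alpha=\sum_{i=0}^n(-1)^i\,v_0\cdots\widehat{v_i}\cdots v_n$. The boundary faces $d_0\alpha$ and $d_n\alpha$ are automatically allowed, so the question is only about the interior faces $d_i\alpha$ for $1\le i\le n-1$: such a face fails to be allowed exactly when $v_{i-1}\to v_{i+1}$ is \emph{not} a directed edge of $G$. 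The key point is that a potentially non-allowed interior face cannot simply be discarded; in the $\partial$-invariant submodule one must check these bad faces cancel. Here I expect the ``no square'' hypothesis to do the work: if $v_{i-1}\to v_{i+1}$ is not an edge, I would look for a second path whose boundary produces the same (non-allowed) term with opposite sign. A more robust route is to observe that for a digraph with no triangles and no squares, any allowed elementary $n$-path with $n\ge 2$ would already force a triangle or square: indeed $v_0\to v_1\to v_2$ together with the edge structure needed to keep $\partial$-invariance forces either $v_0\to v_2$ (a triangle) or a diamond configuration (a square), both excluded. Hence $P_n(G)=0$ for $n\ge 2$, and the identity $\Omega_n(G)=P_n(G)$ holds trivially in those degrees, while for $n=0,1$ it is immediate since $\partial_0=0$ and $\partial_1$ sends edges to vertices which are always allowed. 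One must be slightly careful about directed loops of length $2$ (i.e. $v_0\to v_1\to v_0$), which are allowed $2$-paths not forming a triangle or square in the strict sense of the definitions; I would either note that such $2$-cycles are ruled out if $E\subseteq V\times V\setminus\mathrm{diag}$ is taken without $2$-cycles, or handle them directly by checking $\partial$ of $v_0v_1v_0$ lands in $P_1(G)$ (which it does, giving $v_1v_0-v_0v_0+v_0v_1$, and here $v_0v_0$ is not elementary, so one needs the square/triangle exclusion to kill it — this is precisely the edge case to watch).

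The main obstacle, then, is pinning down exactly which low-dimensional allowed paths survive in a triangle-free, square-free digraph and verifying $\partial$-invariance for them — in other words, making the informal ``any $2$-path forces a triangle or square'' argument airtight, including the directed-$2$-cycle subtlety. Once $\Omega_n(G)=P_n(G)$ is established, I would write: ``By Proposition~\ref{prop-4.11} a discrete Morse function $f$ on $G$ exists; since $\Omega_n(G)=P_n(G)$ for all $n$, the allowed elementary paths form a basis of $\{\Omega_*(G),\partial_*\}$; by Lemma~\ref{le-4.2} the matching $\mathcal{M}(G,f)$ on this basis is acyclic; hence Lemma~\ref{le-8.1}(a) gives \eqref{eq-5.1} and Lemma~\ref{le-8.1}(b) gives \eqref{eq-5.2}.'' That completes the proof.
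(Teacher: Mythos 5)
There is a genuine gap, and it sits exactly where you said the ``only real step'' is. Your reduction strategy is sound in outline, but the statement you propose to prove, $\Omega_n(G)=P_n(G)$ for all $n$, is false for triangle-free, square-free digraphs, and so is the mechanism you offer for it, namely $P_n(G)=0$ for $n\geq 2$. Take $G$ with edges $v_0\to v_1$ and $v_1\to v_2$ only: it contains no triangle and no square, yet $v_0v_1v_2$ is an allowed elementary $2$-path, so $P_2(G)\neq 0$; meanwhile $\partial(v_0v_1v_2)$ contains the non-allowed term $v_0v_2$ with nothing to cancel it, so $\Omega_2(G)=0\neq P_2(G)$. An allowed elementary path by itself forces nothing about triangles or squares; it is only membership in the $\partial$-invariant submodule $\Omega_2$ that forces either $v_0\to v_2$ (a triangle) or a second route $v_0\to v_1'\to v_2$ (a square). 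Your write-up conflates $P_n$ with $\Omega_n$ at precisely this point. The correct statement --- and the one the paper actually uses, quoting \cite[Theorem~4.3]{9} --- is that $\dim\Omega_n(G)=0$ for all $n\geq 2$ when $G$ has neither triangles nor squares, together with the trivial identifications $\Omega_1(G)=P_1(G)$ and $\Omega_0(G)=P_0(G)$; the Morse-complex machinery (Lemma~\ref{le-4.2} plus Lemma~\ref{le-8.1}) is then applied to that much smaller basis, not to all allowed elementary paths.

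Your instinct about directed $2$-cycles is good but your resolution is wrong: for $v_0\to v_1\to v_0$ the face $d_1(v_0v_1v_0)=v_0v_0$ is degenerate (consecutive vertices must be distinct) and drops out, so $\partial(v_0v_1v_0)=v_1v_0+v_0v_1\in P_1(G)$ and $v_0v_1v_0\in\Omega_2(G)$ --- the triangle/square exclusion does not ``kill'' it. This is a caveat for the theorem itself (it is implicitly handled by the conventions of \cite{9}, where such configurations are excluded or treated separately), but it cannot be patched the way you suggest. To repair your proof, replace the false claim $P_n(G)=0$ ($n\geq 2$) by a direct argument that any element of $\Omega_n(G)$ with $n\geq 2$ is zero: the cancellation of non-allowed faces in a $\partial$-invariant combination forces a triangle or a square, contradicting the hypothesis --- or simply cite \cite[Theorem~4.3]{9} as the paper does.
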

\begin{proof}
By \cite[Theorem~4.3]{9}, $\text{dim}\Omega_n(G)=0$ for all $n\geq 2$. Meanwhile, $\Omega_1(G)=P_1(G)$ and $\Omega_0(G)=P_0(G)$. Hence, by Lemma~\ref{le-4.2} and \cite[Theorem~2.1]{chain}, the assertion follows.
\end{proof}

\begin{remark}
Let $G$ be a digraph (not transitive) containing triangles or squares. For each $n\geq 2$, the elements of the basis of $\Omega_n(G)$ are not only allowed elementary paths, but also linear combinations of allowed elementary paths.  The existence of acyclic matching needs further explorations.
\end{remark}

\smallskip

\section{$\mathcal{M}$-collapses by discrete gradient vector fields}\label{se-7}
Let $G$ be a digraph and $f$ a discrete Morse function on $G$. For any vertex $v\in V(G)$, we define the number of edges starting from $v$ as the {\it out-degree} of $v$, and the number of edges arriving at $v$ as the {\it in-degree} of $v$. %We call a vertex $v$ {\it zero-point} if $f(v)=0$.
Assume that  the out-degree and in-degree of any zero-point of $f$  on $G$ are both 1. In this section, we define $\mathcal{M}$-collapses and give the proof of Theorem~\ref{th-0.2}.

%By Lemma~\ref{le-1.12}, all zero-points are not the vertices of any directed loop.
Let $v\in G$ be a zero-point of $f$ on $G$. By Lemma~\ref{le-1.12}, $v$ is not the vertex of any directed loop. Moreover, since the out-degree and in-degree of any zero-point of $f$ on $G$ are both 1,  there exists a unique ordered triple of vertices
$(u,v,w)$  such that $u\to v\to w$ and $u, v, w$ are distinct. %Then we have  two cases to  consider.
If $u\to w$ is a directed edge in $G$, then we substitute $u\to v\to w$ with $u\to w$. Consider the digraph $G'$ whose set of vertices is $V(G)\setminus\{v\}$ and whose set of directed edges is $E(G)\setminus\{u\to v, v\to w\}$. By Definition~\ref{def-1.1}, the function $f'$ on $V(G')$ defined by $f'(x)=f(x)$ for any $x\in V(G')$ gives a discrete Morse function on $G'$. We call the pair $(G', f')$ a {\it one-step $\mathcal{M}$-collapse} of the pair $(G,f)$.

%For   any pair $(\alpha,\beta)\in \mathcal{M}$,  there exists a unique ordered triple of vertices $(u,v,w)$ such that all of the followings hold:
%(1). $u\to v\to w$ is a sub-digraph of $\alpha$;  (2).  $f(v)=0$;  (3).   $\beta$ is obtained from $\alpha$ by removing $v$, that is, by substituting $u\to v\to w$ with $u\to w$.   Consider the digraph $G'$ whose set of vertices is $V(G)\setminus\{v\}$ and whose set of directed edges is $E(G)\setminus\{u\to v,  v\to w\}$.
 %By Definition~\ref{def-1.1}, the function $f'$ on $V(G')$ defined by $f'(x)=f(x)$ for any $x\in V(G')$ gives a discrete Morse function on $G'$.   We call the pair $(G',f')$ a {\it one-step M-collapse}  of the pair $(G,f)$.
\begin{lemma}\label{le-7.11}
For any pair $(\alpha,\beta)\in \mathcal{M}(G,f)$, it is in one of the following forms:
\begin{eqnarray*}
(\alpha,\beta)=\left\{
\begin{array}{c}
\alpha=\cdots \to u\to w\to\cdots, ~\beta=\cdots \to u\to v \to w\to\cdots\\%& \text{ if  } f(v)=0,  u\to v \to w~\text{and}~ u\to w; \\
\alpha=\cdots\to u, ~\beta=\cdots\to u\to v\\   %& \text{ if }  f(v)=0, u\to v \to w;\\
\alpha= w\to\cdots, ~\beta=v \to w\to \cdots   %& \text{ if }  f(v)=0, u\to v \to w.
\end{array}
\right.
\end{eqnarray*}
where $f(v)=0$ and $u\to v \to w$.
\end{lemma}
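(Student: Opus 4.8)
The plan is to analyze the structure of an arbitrary matched pair $(\alpha,\beta)\in\mathcal{M}(G,f)$ directly from the definition of the discrete gradient vector field. By definition, $\alpha^{(n)}<\beta^{(n+1)}$ with $f(\alpha)=f(\beta)$, so $\beta$ is obtained from $\alpha$ by inserting a single vertex, say $t$, and the equality $f(\alpha)=f(\beta)$ forces $f(t)=0$. First I would invoke Lemma~\ref{le-1.13} to note that $t$ is the unique zero-value vertex occurring in $\beta$, and Lemma~\ref{le-1.12} (via Lemma~\ref{le-1.11}) to note that $t$ does not lie on any directed loop inside $\beta$. Write $\beta = \cdots \to x \to t \to y \to \cdots$ where the insertion of $t$ happens between consecutive entries $x$ and $y$ of $\alpha$ (with the conventions that $t$ may be at the very start, in which case there is no $x$, or at the very end, in which case there is no $y$). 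Since $t$ is a zero-point of $f$ on $G$, the standing assumption that the out-degree and in-degree of $t$ are both $1$ means that there is exactly one edge $u\to t$ and exactly one edge $t\to w$ in $G$, with $u,t,w$ distinct; this is precisely the unique ordered triple $(u,t,w)$ described in the paragraph preceding the lemma.

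Next I would match this against the three cases for the position of $t$ in $\beta$. If $t$ is an interior inserted vertex, then the edges $x\to t$ and $t\to y$ of $\beta$ must be edges of $G$, so by uniqueness $x=u$ and $y=w$; moreover for $\beta$ to have come from inserting $t$ into the allowed path $\alpha$ we need $x\to y$, i.e. $u\to w$, to be an edge of $G$, yielding the first form $\alpha=\cdots\to u\to w\to\cdots$, $\beta=\cdots\to u\to v\to w\to\cdots$ (renaming $t$ to $v$ to match the lemma's notation). If $t$ is inserted at the end of $\alpha$, then $\alpha$ ends at some vertex $x$ with $x\to t$ an edge of $G$, forcing $x=u$; this gives the second form $\alpha=\cdots\to u$, $\beta=\cdots\to u\to v$. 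Symmetrically, if $t$ is inserted at the start of $\alpha$, then $\alpha$ starts at some vertex $y$ with $t\to y$ an edge of $G$, forcing $y=w$, giving the third form $\alpha=w\to\cdots$, $\beta=v\to w\to\cdots$. In all three cases $f(v)=0$ and $u\to v\to w$ holds as required.

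The one subtle point I would be careful about is ruling out degenerate positions for $t$ in $\beta$ — for instance $t$ appearing more than once, or $t$ coinciding with an endpoint that is already present in $\alpha$, or an inserted vertex equal to one of its neighbors. Repetition of $t$ is excluded because two entries equal to $t$ would put two zero-value vertices in $\beta$, contradicting Lemma~\ref{le-1.13}; and a configuration like $\cdots\to t\to t\to\cdots$ or a short directed loop through $t$ is excluded by Lemma~\ref{le-1.11}, since $t$ would then lie on a directed loop in $G$ even though $f(t)=0$. The case analysis on the insertion position of $t$ is otherwise exhaustive: by definition of $<$, passing from $\alpha$ to $\beta$ adds exactly one vertex at exactly one slot, and the slot is either before the first vertex, after the last vertex, or strictly between two consecutive vertices. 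The main obstacle is really just bookkeeping — making sure the ``$\beta$ arose from $\alpha$ by insertion at this slot'' hypothesis is used correctly to force the edge $u\to w$ in the interior case, and that the uniqueness of $u$ and $w$ coming from the degree-$1$ assumption is applied on the correct side. Once those are pinned down, the lemma follows.
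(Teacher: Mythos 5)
Your proof is correct and follows essentially the same route as the paper's: identify the unique zero-valued vertex of $\beta$ via Lemma~\ref{le-1.13}, use the degree-one hypothesis to pin down its unique in- and out-neighbours $u$ and $w$, and then case on where that vertex sits in $\beta$. The only cosmetic difference is that the paper splits on whether $u\to w$ is an edge of $G$ and kills the bad subcase by a criticality contradiction, whereas you observe directly that an interior insertion forces $u\to w$ to be an edge because $\alpha$ is an allowed path --- a slightly more direct dispatch of the same case.
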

\begin{proof}
Since $(\alpha,\beta)\in \mathcal{M}(G,f)$, $\beta^{(n+1)}>\alpha^{(n)}$ and $f(\alpha)=f(\beta)$. By Lemma~\ref{le-1.13}, there must exist a unique vertex $v$  of $\beta$ such that $f(v)=0$. Since the out-degree and in-degree of any zero-point of $f$ on $G$ are both 1,
there exists a unique ordered triple of vertices $(u,v,w)$  such that $u\to v\to w$ and $u, v, w$ are distinct.
There are two cases to consider.

{\sc Case~1}.  $u\to w$ is a directed edge in $G$.

{\sc Subcase~1.1}. $u\to v \to w$ is  a subgraph of $\beta$.

Then $\alpha$ is the allowed elementary path  obtained by substituting $u\to v \to w$ with $u\to w$.

{\sc Subcase~1.2}.  $u\to v \to w$ is not a subgraph of $\beta$.

Then we must have $\beta=\cdots \to u\to v$ or $\beta= v\to w\to \cdots$. Correspondingly,  $\alpha=\cdots \to u$ or $\alpha=w\to \cdots$.

%Then $\alpha=\cdots \to u$.

%{\sc Subcase~1.3}.  $\beta= v\to w\to \cdots$.

%Then $\alpha=w\to \cdots$.

{\sc Case~2}.  $u\to w$ is not a directed edge in $G$.

{\sc Subcase~2.1}.  $u\to v \to w$ is  a subgraph of $\beta$.

Then by Lemma~\ref{le-1.13} and Definition~\ref{def-1.2}, $\alpha$ is critical. This contradicts that $(\alpha,\beta)\in \mathcal{M}(G,f)$.

{\sc Subcase~2.2}.  $u\to v \to w$ is  not a subgraph of $\beta$.

Then we must have $\beta=\cdots \to u\to v$ or $\beta= v\to w\to \cdots$. It follows that $\alpha=\cdots \to u$ and $\alpha=w\to \cdots$ respectively.

%$\beta=\cdots \to u\to v$.

%Then $\alpha=\cdots \to u$.

%{\sc Subcase~2.3}.  $\beta= v\to w\to \cdots$.

%Then $\alpha=w\to \cdots$.

Therefore, the lemma follows.
\end{proof}

\begin{lemma}\label{le-6.12}
$\mathcal{M}(G',f')\subseteq  \mathcal{M}(G,f)$.
\end{lemma}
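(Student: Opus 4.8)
The plan is to deduce the inclusion directly from the two defining features of the discrete gradient vector field $\mathcal{M}(\cdot,\cdot)$: the underlying paths of a pair must be allowed and elementary, and the Morse function must take equal values on the two members of the pair. Recall that the one-step $\mathcal{M}$-collapse only deletes the vertex $v$ (a zero-point of $f$) together with the two arrows $u\to v$ and $v\to w$, while the arrow $u\to w$ already belongs to $G$; hence $V(G')\subseteq V(G)$, $E(G')\subseteq E(G)$, so $G'$ is genuinely a subdigraph of $G$, and $f'=f\mid_{V(G')}$.

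First I would take an arbitrary pair $(\alpha,\beta)\in\mathcal{M}(G',f')$, with $\alpha=\alpha^{(n)}$, $\beta=\beta^{(n+1)}$, $\alpha<\beta$, and $f'(\alpha)=f'(\beta)$. Since every arrow of $G'$ is an arrow of $G$, every allowed elementary path on $G'$ is an allowed elementary path on $G$; in particular $\alpha$ and $\beta$ are allowed elementary paths on $G$. The relation $\alpha<\beta$ is purely combinatorial (it records only that $\alpha$ is obtained from $\beta$ by deleting some vertices), so it persists verbatim in $G$. Next, every vertex appearing in $\alpha$ or in $\beta$ lies in $V(G')$, so by the additivity formula (\ref{eq-1.1}) we get $f(\alpha)=f'(\alpha)=f'(\beta)=f(\beta)$. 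By the very definition of the discrete gradient vector field, the pair $\alpha^{(n)}<\beta^{(n+1)}$ with $f(\alpha)=f(\beta)$ is one of those collected into $\mathcal{M}(G,f)$, so $(\alpha,\beta)\in\mathcal{M}(G,f)$, which gives the claimed inclusion.

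There is no serious obstacle here: the statement is a formal consequence of the facts that the restriction of a discrete Morse function to a subdigraph is again a discrete Morse function and that the value of $f$ on a path depends only on its vertices. The one point to watch is the bookkeeping around $G'\subseteq G$ — in particular that no allowed elementary path on $G'$ can use the deleted vertex $v$ or the deleted arrows $u\to v$ and $v\to w$ — which is immediate from the construction of the one-step $\mathcal{M}$-collapse. Alternatively, one could obtain the inclusion by inspecting the explicit list of possible pairs furnished by Lemma~\ref{le-7.11}: each such pair visibly survives the passage from $G'$ to $G$; but the direct argument above avoids that case analysis.
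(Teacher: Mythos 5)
Your argument is correct and is essentially identical to the paper's own proof: both take an arbitrary pair in $\mathcal{M}(G',f')$, observe that $G'\subseteq G$ forces the two paths to remain allowed elementary paths in $G$ with the relation $\alpha<\beta$ intact, and use $f'=f\mid_{G'}$ to transfer the equality of values, concluding membership in $\mathcal{M}(G,f)$. The extra bookkeeping you supply (and the alternative route via Lemma~\ref{le-7.11}) is not needed but does no harm.
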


\begin{proof}
For any pair $(\alpha',\beta')\in \mathcal{M}(G',f')$, $\alpha', \beta'$ are both allowed elementary paths in $G'$ such that $\alpha'<\beta'$ and $f'(\alpha')=f'(\beta')$. Since $G'\subseteq G$ and $f'=f\mid_{G'}$, it follows that $\alpha',\beta'$ are also allowed elementary paths in $G$ and $f(\alpha')=f(\beta')$. Hence $(\alpha', \beta')\in \mathcal{M}(G,f)$.
%$\mathcal{M}(G',f')\subseteq  \mathcal{M}(G,f)$. %Furthermore,  let $\alpha=uvw$ and $\beta=uw$, then $(\alpha,\beta)\in \mathcal{M}(G,f)$. Since $V(G')=V(G)\setminus\{v\}$ and $E(G')=E(G)\setminus\{u\to v, v\to w\}$, it follows  that $(\alpha,\beta)\not\in \mathcal{M}(G',f')$.
The lemma follows.
\end{proof}

Since there are finite zero-points of $f$ on $G$, it follows that there are finite triples $\{(u_k, v_k, w_k)\}_{0\leq k\leq N}$ such that $f(v_k)=0$ and $u_k\to w_k$ is an allowed elementary path of $G$.  Denote the subgraph obtained by $k$-step $\mathcal{M}$-collapse as $G_k$ whose set of vertices is $V(G)\setminus\{v_1,\cdots,v_k\}$ and whose set of directed edges is $E(G)\setminus\{(u_1\to v_1, v_1\to w_1),\cdots,(u_k\to v_k, v_k\to w_k)\}$. Similarly, the restriction of $f$ on $G_k$ (denoted as $f_k$) is a discrete Morse function on $G_k$.  Obviously,  $G_N\subseteq \cdots \subseteq\cdots  G_{1}\subseteq G$. By induction, we can get a subgraph of $G$ in which there is no triple $(u,v, w)$ such that $f(v)=0$, $u\to v\to w$ and $u\to w$. We denote it as $(\tilde{G},\tilde{f})$.

By  Lemma~\ref{le-7.11} and Lemma~\ref{le-6.12}, we have that
\begin{proposition}\label{pro-6.1}
Any pair $(\alpha,\beta)\in\mathcal{M}(\tilde{G},\tilde{f})$ is in the  form of
\begin{equation*}
  \left\{
    \begin{array}{ll}
      \alpha=\cdots \to u  \\
      \beta=\cdots \to u\to v
    \end{array}
  \right.\quad  \textmd{or} \quad \left\{
    \begin{array}{ll}
      \alpha=w\to\cdots \\
      \beta=v\to w\to\cdots 
    \end{array}
  \right.
\end{equation*}
where $f(v)=0$, $u\to v\to w$ and $u\to w$ is not a directed edge in $G$.
\end{proposition}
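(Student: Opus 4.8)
The plan is to combine the iterated version of Lemma~\ref{le-6.12} with the normal form supplied by Lemma~\ref{le-7.11}, and then to exploit both the defining property of $\tilde G$ and the degree hypothesis on zero-points.

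First I would extend the inclusion $\mathcal{M}(G',f')\subseteq\mathcal{M}(G,f)$ of Lemma~\ref{le-6.12} along the whole collapse sequence. Applying it to each one-step $\mathcal{M}$-collapse in the chain $G_N\subseteq\cdots\subseteq G_1\subseteq G$ and composing gives
\begin{equation*}
\mathcal{M}(\tilde G,\tilde f)=\mathcal{M}(G_N,f_N)\subseteq\cdots\subseteq\mathcal{M}(G_1,f_1)\subseteq\mathcal{M}(G,f).
\end{equation*}
Hence any $(\alpha,\beta)\in\mathcal{M}(\tilde G,\tilde f)$ also lies in $\mathcal{M}(G,f)$, so Lemma~\ref{le-7.11} applies: $(\alpha,\beta)$ has one of the three listed shapes, with a zero-point $v$ satisfying $u\to v\to w$ in $G$. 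Moreover $\alpha$ and $\beta$ are allowed elementary paths of $\tilde G$, so every edge occurring in them belongs to $E(\tilde G)$ and every vertex occurring in them belongs to $V(\tilde G)$.

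Next I would rule out the first form $\alpha=\cdots\to u\to w\to\cdots$, $\beta=\cdots\to u\to v\to w\to\cdots$. There $u\to w$ is an edge of $\alpha$, hence of $\tilde G$, and $u\to v$, $v\to w$ are edges of $\beta$, hence of $\tilde G$; together with $\tilde f(v)=f(v)=0$ this exhibits a triple $(u,v,w)$ in $\tilde G$ with $u\to v\to w$ and $u\to w$, contradicting the fact that $\tilde G$ was obtained by $\mathcal{M}$-collapsing until no such triple remained. So $(\alpha,\beta)$ has the second or third form, which are exactly the two shapes in the statement, and it only remains to check $u\to w\notin E(G)$.

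For this last point I would argue by contradiction: assume $u\to w\in E(G)$. Since the zero-point $v$ has out-degree and in-degree $1$ in $G$, the distinct edges $u\to v$ and $u\to w$ force $u$ to be a non-zero-point, and the distinct edges $v\to w$ and $u\to w$ force $w$ to be a non-zero-point. Every edge deleted in a one-step $\mathcal{M}$-collapse is incident to the (zero-point) vertex being removed, so the edge $u\to w$, both of whose endpoints are non-zero-points, is never deleted and hence lies in $E(\tilde G)$, while $u\to v$ and $v\to w$ survive unless $v$ itself is collapsed. If $v$ is not collapsed, then $(u,v,w)$ is again a forbidden triple in $\tilde G$; if $v$ is collapsed, then $v\notin V(\tilde G)$, contradicting that $v$ appears in $\beta$. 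Either way we reach a contradiction, so $u\to w\notin E(G)$. The step demanding the most care is precisely this one: one must track the would-be shortcut edge $u\to w$ (along with $u\to v$ and $v\to w$) through every collapse step, and the degree-$1$ hypothesis is exactly what prevents any collapse other than one removing $v$ from disturbing these edges.
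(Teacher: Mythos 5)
Your proof is correct and follows essentially the same route as the paper: the iterated inclusion $\mathcal{M}(\tilde G,\tilde f)\subseteq\mathcal{M}(G,f)$ from Lemma~\ref{le-6.12}, the normal forms of Lemma~\ref{le-7.11}, and the observation that the pairs arising when $u\to w\in E(G)$ cannot survive the collapse. The paper disposes of that last point in one sentence (``any pair in Case~1 of Lemma~\ref{le-7.11} is removed by $\mathcal{M}$-collapse''), whereas you supply the missing edge-tracking details showing that such a triple forces $v$ to be collapsed, which is a welcome elaboration rather than a different argument.
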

\begin{proof}
By Lemma~\ref{le-6.12}, $\mathcal{M}(\tilde{G},\tilde{f})\subseteq \mathcal{M}(G,f)$. Note that any pair $(\alpha,\beta)\in\mathcal{M}(G,f)$ in Case1 of Lemma~\ref{le-7.11} is removed by $\mathcal{M}$-collapse. Hence the assertion follows.
\end{proof}

%\begin{theorem}\label{th-7.2}
%Let $G$ and $\tilde{G}$ be  digraphs mentioned above.  Then for each $n\geq 0$, $\text{Crit}_n(G)\cap P(\tilde{G})=\text{Crit}_n(\tilde{G})$.
%\end{theorem}
%\begin{proof}
%By Lemma~\ref{le-1.15}, we have that $\text{Crit}_n(G)\cap P(\tilde{G})\subseteq \text{Crit}_n(\tilde{G})$.

%On the other hand, suppose to the contrary,  $\alpha=v_0\cdots v_n$ is  critical in $\tilde{G}$ but not critical in $G$. By Lemma~\ref{le-1.14}, one of (i)'' and {ii}'' holds for $\alpha$.

%{\sc Case~1}. (i)'' holds for $\alpha$, (ii)'' does not.

%By Definition~\ref{def-1.1}(i), there exists one vertex $v_i$ ($1\leq i\leq n-1$)  with $f(v_i)=0$ such that $\alpha'=v_0\cdots v_{i-1}\hat{v_i}v_{i+1}\cdots v_n$ is an allowed elementary path in $G$ but not allowed in $\tilde{G}$, and $f(\alpha')=f(\alpha)$. Hence in $G$, there exists a triple $(v_{i-1},v_i,v_{i+1})$ such that $f(v_i)=0$, $v_{i_1}\to v_{i}\to v_{i+1}$ and $v_{i-1}\to v_{i+1}$. By the definition of $\mathcal{M}$-collapse, $v_{i-1}\to v_{i+1}$ is a directed edge in $\tilde{G}$. This contradicts that $\alpha'$ is not allowed in $\tilde{G}$.

%{\sc Case~2}. (ii)'' holds for $\alpha$, (i)'' does not.

%By Definition~\ref{def-1.1}(ii),
%\end{proof}

In the next, we prove that $\tilde {G}$ and $G$ have the same path homology groups.
%Furthermore, we have that $\tilde{G}$ has the same path homology groups with $G$ which will be proved next.
\begin{proof}[Proof of Theorem~\ref{th-0.2}]
Define a digraph map $r: G\rightarrow \tilde{G}$ such that
\begin{eqnarray}\label{eq-1}
r(v)=\left\{
\begin{array}{cc}
w, & \text{if}~f(v)=0~\text{and there exists a triple}~(u,v,w)\\
   & \text{such that}~u\to v\to w~\text{and}~u\to w;\\
v, & \text{otherwise}.
\end{array}
\right.
\end{eqnarray}
By the definition of $\mathcal{M}$-collapses, it can be verified directly that $r\mid_{\tilde{G}}=\text{id}_{\tilde{G}}$ and $r$ is a retraction of $G$ onto $\tilde{G}$.  By Proposition~\ref{prop-2.2} and Theorem~\ref{th-2.1}, it is sufficient to prove  that $i\circ r\simeq \text{id}_{G}$,  where $i: \tilde{G}\rightarrow  G$ is the natural inclusion map.

Let $I_1$ be the line digraph  such that the set of vertices is $\{0,1\}$ and the set of directed edges is exactly $\{0\to 1\}$. Define a map
\begin{eqnarray*}
F: G\boxdot I_1 \rightarrow G
\end{eqnarray*}
such that $F(v,0)=v$ and $F(v,1)=(i\circ r)(v)$. Then by (\ref{eq-1}), we have that $F\mid_{G\boxdot\{0\}}=\text{id}_{G}$, $F\mid_{G\boxdot\{1\}}=i\circ r$.
Without loss of generality,  we suppose that $F$ is a one-step $\mathcal{M}$-collapse. Then there exist a vertex $v$ such that $f(v)=0$ and a unique ordered triple  of vertices $(u,v,w)$ with $u \to v \to w$ and $u\to w$ in $G$.  It follows that
\begin{eqnarray*}
&&F((u,0)\to (v,0))=u\to v,\quad F((v,0)\to (w,0))=v\to w,\quad F((u,0)\to (w,0))=u\to w,\\
&&F((u,1)\to (v,1))=u\to w,\quad F((v,1)\to (w,1))=w, \quad F((u,1)\to (w,1))=u\to w,\\
&&F((u,0)\to (u,1))=u, \quad F((v,0)\to (v,1))=v\to w,\quad F((w,0)\to (w,1))=w.
\end{eqnarray*}
Hence $F$ is  well-defined and it is a digraph map from $G\boxdot I_1$ to $G$. By Definition~\ref{def-2.10}, we have that $i\circ r\simeq \text{id}_{G}$. By Definition~\ref{def-2.11}, $r$ is  a deformation retraction. This implies the theorem.
\end{proof}

%We obtain a map $\mathcal{F}:  \mathcal{M}\longrightarrow V\times V\times V$   sending $(\alpha,\beta)$ to $(u,v,w)$.

%For each triple $(u,v,w)\in \mathcal{F}(\mathcal{M})$,   the pre-image %we define its multiplicity $m(u,v,w)$    to be
%$\mathcal{F}^{-1}(u,v,w)$ is a subset of $M$. %, the  cardinality of the pre-image of $(u,v,w)$.

%\section{Further discussion}
%A graph is a digraph where each edge is assigned with two directions.

\bigskip
 \noindent {\bf Acknowledgement}. The authors would like to thank Prof. Yong Lin and Prof. Jie Wu  for their supports,  discussions and encouragements. The authors also would like to express their deep gratitude to the reviewer(s) for their careful reading, valuable comments, and helpful suggestions.

The first author is supported by the Youth Fund of Hebei Provincial Department of Education (QN2019333),
the Natural Fund of Cangzhou Science and Technology Bureau (No.197000002) and a Project of Cangzhou Normal University (No.xnjjl1902).
The second author is supported by the Postdoctoral International Exchange Program of
China 2019 project from The Office of China Postdoctoral Council, China Postdoctoral Science Foundation.

Chong Wang  (for correspondence)

 Address: $^1$School of Mathematics, Renmin University of China, 100872 China.

 $^2$School of Mathematics and Statistics, Cangzhou Normal University, 061000 China .

 e-mail:  wangchong\_618@163.com

  \medskip

 Shiquan Ren

 Address: Yau  Mathematical Sciences Center, Tsinghua University,  100084 China.

 e-mail: srenmath@126.com

 \end{document}